\documentclass[runningheads]{llncs}

\usepackage[T2A,T1]{fontenc}
\usepackage{amssymb, amsmath}
\usepackage{cleveref}
\usepackage{ifthen}

\let \leq \leqslant
\let \nleq \nleqslant
\let \geq \geqslant
\let \ngeq \ngeqslant
\let \phi \varphi
\let \epsilon \varepsilon
\let \embeds \hookrightarrow

\renewcommand \L [2][]{L_{\ifthenelse{\equal{#1}{}}\ngeq{\ngeq,#1}}(#2)}
\renewcommand \l [2][]{\o(\L[#1]{#2})}
\newcommand \K [2][]{L_{\ifthenelse{\equal{#1}{}}{\bot}{\bot,#1}}(#2)}
\renewcommand \k [2][]{\w(\K[#1]{#2})}

\renewcommand \S [1][]{\mathsf{P_f^{#1}}}
\def \T {\mathcal{T}}
\def \F {\mathcal{F}}

\def \h {\textbf{h}}
\def \o {\textbf{o}}
\def \w {\textbf{w}}

\title{Maximal Order Type and Width of the $h$-Preorder on Finite Labeled Trees}
\author{Ilya Smirnov\thanks{The work of the author was supported by the Russian Science Foundation, project 23-11-00133-\mbox{\usefont{T2A}{\rmdefault}{m}{n}\CYRP}.}}
\authorrunning{I. Smirnov}
\institute{Department of Mathematics and Computer Science\\ St. Petersburg University, 199178 St. Petersburg, Russia}

\begin{document}

\maketitle

\begin{abstract}
    Important measures of a well-quasi-order are its ordinal invariants, namely the height, the maximal order type, and the width. In this work, we continue the research on the ordinal invariants of finite labeled trees and generalize the results presented in \cite{Selivanov2025}.
    We establish tight bounds for the width and maximal order type of the $h$-preorder on $Q$-labeled trees in terms of the corresponding invariants of Q. Specifically, we prove that for a well-quasi-order $Q$, the width and the maximal order type satisfy $\w(\T(Q)) \geq \phi_{\w(Q)}(0)$ and $\o(\T(Q)) \leq \phi_{\o(Q)}(0)$, with equality when $\o(Q) = \w(Q)$.
\end{abstract}

\section{Introduction}

Well-quasi-orders (which we abbreviate as WQOs) are a particular case of the well-founded preorders, which have no infinite antichains. They have applications in many branches of mathematics, logic and computer science, including, but not limited to, proof theory, reverse mathematics, combinatorics, graph theory, and program verification.

In the finite case, there are several measures of the preorder: its size, the length of a maximal chain, and the length of a maximal antichain. They could be generalized to any WQO as \textit{ordinal invariants}: the \textit{height} (maximal order type of a chain), the \textit{maximal order type} of linearisation, and the \textit{width} (which is harder to describe; informally, it measures how antichains relate to each other). We give a formal uniform definition of these invariants in \cref{wqos}.

Among the most important examples of WQOs are tree orderings. Given a WQO $Q$, one can consider the set of $Q$-labeled trees ordered by homeomorphic embeddability. Kruskal proved that this is also a WQO \cite{Kruskal1960}.

In this paper, we focus on a weaker order on the same set: the $h$-preorder (we define it in \cref{trees}). This preorder was introduced in \cite{Selivanov2011} and it is closely connected with the Wadge reducibility and hierarchies of $k$-partitions \cite{Hertling1993,Selivanov2011}. While it might look similar to Kruskal's ordering, their ordinal invariants are very different.

The maximal order type of Kruskal's ordering was estimated in \cite{Schmidt2020}. In the previous work \cite{Selivanov2025} by the present author and Victor Selivanov, the maximal order type and width for the $h$-preorder over a finite antichain were evaluated (also, it is easy to see that the height in this case is equal to $\omega$). In the forthcoming paper by the author and Selivanov, the height of $h$-preorder was investigated. In this work, we apply the ideas from \cite{Selivanov2025,Friedman2023} to estimate the maximal order type and the width of the $h$-preorder over any WQO.

In \cref{section o}, we prove that $\o(Q) \leq \o(\T(Q)) \leq \phi_{-1+\o(Q)}(0)$, where $\phi$ denotes the Veblen function (the definition is given in \cref{arith}), and $\o$ denotes the maximal order type. In \cref{section w}, we show that $\w(\T(Q))$ could be arbitrarily large when the width $\w(Q)$ is fixed, and prove lower bounds $\w(\T(Q) \sqcup 1) \geq \phi_{\o(Q)}(0)$ and $\w(\T(Q)) \geq \phi_{-1+\w(Q)}(0)$. Together, these estimates show that if $\o(Q) = \w(Q) = \alpha$ then $\o(\T(Q)) = \w(\T(Q)) = \phi_{-1+\alpha}(0)$, hence the bounds are tight.

\section{Preliminaries}

\subsection{Ordinal arithmetic}\label{arith}

We assume that the reader is familiar with ordinal addition, multiplication, and exponentiation.

For an ordinal $\alpha \geq 1$, we write $-1+\alpha$ to denote the unique ordinal $\beta$ such that $1+\beta = \alpha$. For finite $\alpha$, it is $\alpha-1$; for infinite $\alpha$, it is $\alpha$.

Any positive ordinal $\alpha$ has a unique representation in \textit{Cantor normal form} $\alpha = \omega^{\alpha_1}+\dots+\omega^{\alpha_n}$ where $\alpha_1 \geq \dots \geq \alpha_n$ and $1 \leq n < \omega$.

Let $\alpha = \omega^{\alpha_1}+\dots+\omega^{\alpha_n}$ and $\beta = \omega^{\beta_1}+\dots+\omega^{\beta_n}$ be ordinals in Cantor normal forms. The \textit{natural sum} $\alpha \oplus \beta$ is defined as the sum of $\omega^{\alpha_1},\dots,\omega^{\alpha_n},\omega^{\beta_1},\dots,\omega^{\beta_m}$ in non-increasing order. The \textit{natural product} $\alpha \otimes \beta$ is defined as $\bigoplus_{i<n,j<m} \omega^{\alpha_i\oplus\beta_j}$. In contrast to regular ordinal sum and product, natural operations are commutative.

The \textit{Veblen function} $\phi_\alpha(\beta)$ is defined by recursion on $\alpha$ as follows: $\phi_0(\beta) = \omega^\beta$, and for $\alpha > 0$, $\phi_\alpha$ enumerates in increasing order the ordinals that are fixed points of $\phi_{\alpha'}$ for all $\alpha' < \alpha$. The \textit{fixed point free Veblen function} $\overline{\phi}_\alpha(\beta)$ is defined as:
\begin{itemize}
    \item $\overline{\phi}_\alpha(\beta) = \phi_\alpha(\beta+1)$ if $\beta = \gamma+n$ for some $n < \omega$ and $\phi_\alpha(\gamma) = \gamma$.
    \item $\overline{\phi}_\alpha(\beta) = \phi_\alpha(\beta)$ otherwise.
\end{itemize}

An ordinal is called an \textit{epsilon number} if it is in the image of $\phi_1$, i.e., it satisfies the equation $\omega^x = x$. An ordinal $\alpha$ is \textit{additively (resp. multiplicatively) indecomposable}, if it is closed under addition (resp. multiplication). $\omega$ and all epsilon numbers are additively and multiplicatively indecomposable.

\subsection{Well-quasi-orders}\label{wqos}

We abuse the notation and simply write $Q$ instead of $(Q, \leq)$. Given a subset of the preorder, we implicitly assume that it is equipped with an induced preorder. For an element $x \in X$ and a binary relation $R$ on $X$, we define $L_{R,X}(x) = \{y \in X \mid y R x\}$. We often omit the index $X$ and write $L_R(x)$.

For elements $x, y$ of preorder $Q$, we write $x \perp y$ to mean that $x$ and $y$ are incomparable, i.e. $x \not\leq y$ and $y \not\leq x$.

A sequence of elements of a preorder $Q$ is called \textit{bad} if $x_i \not\leq x_j$ for every pair of indices $i < j$. A preorder is a \textit{well-quasi-order} (WQO) if it does not contain infinite bad sequences. Well-quasi-orders have many equivalent characterizations. We mention two of them:
\begin{itemize}
    \item A preorder $Q$ is a WQO if and only if it contains neither infinite descending chains nor infinite antichains.
    \item A preorder $Q$ is a WQO if and only if every union $\bigcup_{i \in I} U_i$ of its upwards-closed subsets is equal to some finite union of them.
\end{itemize}

Given a WQO $Q$, let $Dec(Q)$, $Bad(Q)$, and $Ant(Q)$ be the trees of decreasing sequences, bad sequences, and antichains of $Q$, respectively, ordered by inverse prefix order. Since $Q$ does not contain infinite decreasing sequences (resp., bad sequences, antichains), these trees are well-founded, and their \textit{ranks} can be defined. The \textit{rank} of a node is the least ordinal that is strictly larger than the ranks of its smaller nodes. The \textit{rank} of a tree is the rank of its root (which, in our case, is the empty sequence). Now, the \textit{height} $\h(Q)$, \textit{maximal order type} $\o(Q)$, and \textit{width} $\w(Q)$ of $Q$ are defined as ranks of these trees, respectively.

Since every decreasing sequence or antichain is a bad sequence, we have $\h(Q), \w(Q) \leq \o(Q)$. These invariants can be computed inductively through the \textit{residual equations}:
\begin{align*}
    \h(Q) = \sup_{x \in Q} (\h(L_<(x)) + 1)\\
    \o(Q) = \sup_{x \in Q} (\l{x} + 1)\\
    \w(Q) = \sup_{x \in Q} (\k{x} + 1)
\end{align*}

Let $P$ and $Q$ be preorders. A \textit{reflection} is a function $f \colon P \to Q$ such that for all $x, y \in P$, if $f(x) \leq f(y)$, then $x \leq y$. If there exists a reflection from $P$ to $Q$, we write $P \embeds Q$.

If $f$ is a reflection, then it maps bad sequences to bad sequences and antichains to antichains, so we obtain the following lemma:
\begin{lemma}\label{embedding}
    Let $Q$ be a WQO and suppose that $P \embeds Q$. Then $P$ is also a WQO and $\o(P) \leq \o(Q)$, $\w(P) \leq \w(Q)$.
\end{lemma}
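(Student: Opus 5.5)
Fix a reflection $f\colon P\to Q$. The plan is to push every object from $P$ forward along $f$ and check that its defining property survives. This gives the WQO property of $P$ directly. For the two inequalities, I would compare ranks of trees via a monotone map between well-founded trees, rather than going through the residual equations.

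\emph{$P$ is a WQO.} Let $(x_i)$ be a sequence in $P$ and suppose $(f(x_i))$ is not bad, so $f(x_i)\leq f(x_j)$ for some $i<j$. The reflection property then gives $x_i\leq x_j$. Taking the contrapositive, $f$ maps bad sequences of $P$ to bad sequences of $Q$, and this includes infinite ones. Hence an infinite bad sequence in $P$ would give one in $Q$, which is impossible.

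\emph{Antichains.} If $x\perp y$ in $P$, then $f(x)\leq f(y)$ would force $x\leq y$, and symmetrically for $f(y)\leq f(x)$. So $f(x)\perp f(y)$, and $f$ maps antichains to antichains. In particular $f$ is injective on any antichain and on any bad sequence, since $f(x_i)=f(x_j)$ with $i<j$ would give $x_i\leq x_j$. So the images are genuine sequences of the same length.

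\emph{Rank comparison.} Applying $f$ coordinatewise gives a map $F\colon Bad(P)\to Bad(Q)$, and by the above also $F\colon Ant(P)\to Ant(Q)$. This map preserves the root and sends one-step extensions to one-step extensions. In the inverse prefix order it is strictly monotone: if $s$ is a proper extension of $t$, then $F(s)$ is a proper extension of $F(t)$.

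I would then show $\mathrm{rank}(s)\leq\mathrm{rank}(F(s))$ by well-founded induction on $s$. The rank of $s$ is the supremum of $\mathrm{rank}(s')+1$ over the children $s'$ of $s$. For each such $s'$, the induction hypothesis gives $\mathrm{rank}(s')\leq\mathrm{rank}(F(s'))$, and $F(s')$ is a child of $F(s)$. So every term in the supremum for $s$ is bounded by a term in the supremum for $F(s)$. Evaluating at the empty sequence yields $\o(P)\leq\o(Q)$ and $\w(P)\leq\w(Q)$.

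There is no real obstacle here. The only points needing care are that a reflection need not be injective or monotone, so one cannot argue via induced suborders, and that strict monotonicity of $F$ must be checked on the correct tree. Both are handled by the observations above.
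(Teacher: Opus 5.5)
Your proof is correct and follows the paper's reasoning: the paper justifies this lemma only by noting that a reflection maps bad sequences to bad sequences and antichains to antichains. Your coordinatewise map $Bad(P)\to Bad(Q)$ and $Ant(P)\to Ant(Q)$, with the well-founded induction showing ranks do not decrease, fills in the rank comparison that the paper leaves implicit.
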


\subsection{Operations on WQOs}

The class of WQOs is closed under many operations: disjoint sum, lexicographic sum, direct and cartesian products, etc. Here we cite results about the operations that are used in this article. For a broader survey of these and other constructions and their ordinal invariants, see, e.g., \cite{Dzamonja2020}.

For preorders $(P, \leq_P)$ and $(Q, \leq_Q)$, the \textit{disjoint sum} $P \sqcup Q$ is the disjoint union of $P$ and $Q$ with the order $\leq_P \cup \leq_Q$, and the \textit{lexicographic sum} $P + Q$ is the disjoint union of $P$ and $Q$ with the order $\leq_P \cup \leq_Q \cup P \times Q$.

\begin{lemma}[\cite{Abraham1999,deJongh1977,Dzamonja2020}]\label{h o w(cup)}
    If $P$ and $Q$ are WQOs, then so are $P \sqcup Q$ and $P + Q$. The ordinal invariants are computed as:
    \begin{align*}
        &\h(P \sqcup Q)=\max(\h(P),\h(Q))&&\h(P+Q)=\h(P)+\h(Q)\\
        &\o(P \sqcup Q)=\o(P)\oplus\o(Q)&&\o(P+Q)=\o(P)+\o(Q)\\
        &\w(P \sqcup Q)=\w(P)\oplus\w(Q)&&\w(P+Q)=\max(\w(P),\w(Q))
    \end{align*}
\end{lemma}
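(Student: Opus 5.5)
We have to prove the lemma on disjoint and lexicographic sums: closure under WQO, plus the six formulas. The plan is to prove everything from the rank definitions via the residual equations, by well-founded induction. In every case the key step is to compute the residual of an element.

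\emph{WQO property.} Take an infinite sequence in $P \sqcup Q$ (resp. $P+Q$). It has an infinite subsequence inside $P$ or inside $Q$. That subsequence contains a good pair, because $P$ and $Q$ are WQOs and the orders on $P$ and on $Q$ are unchanged in either sum.

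\emph{Lexicographic sum.} We argue by induction on $\o(Q)$ and apply the residual equation $\o(P+Q)=\sup_x(\o(\L{x})+1)$.
\begin{itemize}
\item For $x\in P$ we have $\L{x}=\L[P]{x}$. These elements contribute exactly $\o(P)$.
\item For $y\in Q$ we have $\L{y}=P+\L[Q]{y}$. By induction this has order type $\o(P)+\o(\L[Q]{y})$.
\end{itemize}
Taking the supremum over $y$ of $\o(P)+\o(\L[Q]{y})+1$ gives $\o(P)+\o(Q)$, using continuity of $\alpha\mapsto\o(P)+\alpha$ in the right argument. This also dominates the contribution $\o(P)$ from the first case. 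If $Q=\emptyset$ the formula is trivial.

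The same scheme works for height, using $L_<$: for $y\in Q$ we get $L_<(y)=P+L_{<,Q}(y)$.

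For width, incomparability only happens inside one summand. So $\K{x}=\K[P]{x}$ for $x\in P$, and similarly for $Q$. Hence $\w(P+Q)=\max(\w(P),\w(Q))$ directly, with no induction needed. The disjoint-sum height is equally direct: $L_<(x)$ stays in the summand of $x$, so we get the maximum.

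\emph{Disjoint sum.} For $\o$ and $\w$ we argue by induction on the pair $(\o(P),\o(Q))$ (resp. the widths) in the product order.
\begin{itemize}
\item For $x\in P$: $\L{x}=\L[P]{x}\sqcup Q$, and $\K{x}=\K[P]{x}\sqcup Q$.
\item Symmetrically for $x\in Q$.
\end{itemize}
By induction, $\o(P\sqcup Q)=\sup\big(\sup_{x\in P}(\o(\L[P]{x})\oplus\o(Q))+1,\ \sup_{y\in Q}(\o(P)\oplus\o(\L[Q]{y}))+1\big)$. The width gives the same expression with $\w$ and $L_\bot$ in place of $\o$ and $L_{\ngeq}$.

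This reduces everything to the ordinal identity
\[
\alpha\oplus\beta=\sup\{\alpha'\oplus\beta+1,\ \alpha\oplus\beta'+1 : \alpha'<\alpha,\ \beta'<\beta\}.
\]
Here the supremum of $\o(\L[P]{x})+1$ ranges over all ordinals below $\o(P)$. That needs care, since the residuals need not realize every $\alpha'<\alpha$. However, $\oplus$ is strictly monotone, and every $\alpha'<\alpha$ is bounded by some $\o(\L[P]{x})$, because $\alpha=\sup(\o(\L[P]{x})+1)$. So the supremum of $\o(\L[P]{x})\oplus\beta+1$ equals the supremum of $\alpha'\oplus\beta+1$ over all $\alpha'<\alpha$.

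The identity itself is the standard characterization of the natural sum: it is the least ordinal strictly greater than all $\alpha'\oplus\beta$ and $\alpha\oplus\beta'$. I would prove it via Cantor normal forms. Let $\omega^\gamma$ be the smallest term of $\alpha\oplus\beta$, and suppose it comes from $\alpha$, say $\alpha=\delta+\omega^\gamma$. Every $\xi<\alpha\oplus\beta$ is below $\alpha'\oplus\beta$ for some $\alpha'=\delta+\omega^{\gamma'}\cdot n$ with $\gamma'<\gamma$, or is below the value at the predecessor when $\gamma=0$.

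\emph{Main obstacle.} I expect the main difficulty to be exactly this characterization of $\oplus$ as the supremum over one-sided decreases, including the bookkeeping that the residuals' ordinals are cofinal. Everything else is routine induction.
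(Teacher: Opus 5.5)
Your proof is correct. The paper gives no proof of this lemma and simply quotes it from \cite{Abraham1999,deJongh1977,Dzamonja2020}, so your argument is a self-contained replacement for that citation. It is essentially the standard residual-equation proof of \cite{Dzamonja2020}.

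The residual computations are right. In $P\sqcup Q$, for $x\in P$ one has $L_{\ngeq}(x)=L_{\ngeq,P}(x)\sqcup Q$ and $L_\bot(x)=L_{\bot,P}(x)\sqcup Q$. In $P+Q$, for $y\in Q$ one has $L_{\ngeq}(y)=P+L_{\ngeq,Q}(y)$ and $L_<(y)=P+L_{<,Q}(y)$. The residuals of $x\in P$, and all $L_\bot$-residuals, stay inside one summand.

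Your induction measures strictly decrease, because every residual has a strictly smaller invariant. The cofinality step only needs weak monotonicity of $\oplus$.

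Your Cantor-normal-form sketch of $\alpha\oplus\beta=\sup\{\alpha'\oplus\beta+1,\ \alpha\oplus\beta'+1\}$ also goes through. Write $\alpha=\delta+\omega^\gamma$ with $\omega^\gamma$ the least term of $\alpha\oplus\beta$. For $\gamma'<\gamma$ you get $(\delta+\omega^{\gamma'}n)\oplus\beta=(\delta\oplus\beta)+\omega^{\gamma'}n$. For $\gamma=0$ you get $\xi\le\delta\oplus\beta$, which is exactly what the $+1$ in the supremum absorbs.

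The only slip is notational. In your displayed formula for $\o(P\sqcup Q)$, the $+1$ must sit inside the supremum: $\sup_{x\in P}\bigl(\o(L_{\ngeq,P}(x))\oplus\o(Q)+1\bigr)$. Read as $(\sup_x\cdots)+1$, the formula is false whenever the supremum is an unattained limit, already for $P=\omega$ and $Q=\emptyset$. You use the correct form in the identity that follows, so nothing breaks.
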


In the \textit{cartesian product} $P \times Q$ of preorders $P$ and $Q$, the elements are compared component-wise.

\begin{lemma}[\cite{deJongh1977}]\label{h o(times)}
    If $P$ and $Q$ are WQOs, then so is $P \times Q$. The height and maximal order type are computed as:
    \begin{align*}
        &\h(P \times Q) = \sup_{\alpha < \h(P), \beta < \h(Q)} \alpha\oplus\beta+1\\
        &\o(P \times Q) = \o(P) \otimes \o(Q)
    \end{align*}
\end{lemma}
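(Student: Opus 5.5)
The plan has three parts: well-quasi-orderedness, the height formula via a rank argument, and the maximal order type via the residual equation.

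\emph{Well-quasi-orderedness.} I would use the standard fact that every infinite sequence in a WQO has an infinite non-decreasing subsequence. This follows from Ramsey's theorem applied to pairs $i<j$, colouring by whether $x_i \leq x_j$, $x_i > x_j$, or $x_i \perp x_j$. Given $(p_i,q_i)_{i<\omega}$ in $P \times Q$, I pass to a subsequence on which $(p_i)$ is non-decreasing. Inside it I pass to a further subsequence on which $(q_i)$ is non-decreasing. Any two terms of the final subsequence are then comparable in the right direction, so no infinite bad sequence exists.

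\emph{Height.} Let $r_P, r_Q$ be the rank functions on the strict orders, so that $\h(P)=\sup_p(r_P(p)+1)$, and similarly for $Q$. For the upper bound, I would show that $(p,q) \mapsto r_P(p) \oplus r_Q(q)$ strictly increases along $<$ in $P\times Q$. Indeed, $(p,q)<(p',q')$ means both coordinates are $\leq$ and at least one is strict, so that rank strictly increases and the other does not decrease. The claim then follows from strict monotonicity of $\oplus$ in each argument. Hence $r_{P\times Q}(p,q) \leq r_P(p)\oplus r_Q(q)$. For the lower bound, I would prove $r_{P\times Q}(p,q) \geq r_P(p)\oplus r_Q(q)$ by induction. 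The input is the arithmetic fact that every $\gamma < \alpha\oplus\beta$ satisfies $\gamma \leq \alpha'\oplus\beta$ for some $\alpha'<\alpha$, or $\gamma \leq \alpha\oplus\beta'$ for some $\beta'<\beta$; this is checked from Cantor normal forms. Since below $p$ there are elements of every rank $<r_P(p)$, we get elements below $(p,q)$ of rank at least $\gamma$, for every such $\gamma$. Taking the supremum gives the stated formula.

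\emph{Maximal order type, upper bound.} I would argue by induction on $\o(P)\oplus\o(Q)$ using the residual equation. If $(p',q') \ngeq (p,q)$, then $p' \ngeq p$ or $q'\ngeq q$. So $\L{(p,q)}$ reflects into the disjoint sum $(\L{p}\times Q) \sqcup (P \times \L{q})$. By \cref{embedding}, \cref{h o w(cup)} and the induction hypothesis,
\[
\l{(p,q)} \leq (\l{p}\otimes\o(Q)) \oplus (\o(P)\otimes\l{q}).
\]
The main obstacle is the ordinal inequality $\alpha'\otimes\beta \oplus \alpha\otimes\beta' < \alpha\otimes\beta$ for $\alpha'<\alpha$ and $\beta'<\beta$. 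I would prove it by writing $\alpha = \alpha'' \oplus \omega^{\mu}\oplus\dots$ with $\omega^\mu$ the first Cantor term where $\alpha$ exceeds $\alpha'$, and similarly for $\beta$. Then I compare the leading exponents after distributing $\otimes$ over $\oplus$. Taking the supremum gives $\o(P\times Q)\leq \o(P)\otimes\o(Q)$.

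\emph{Maximal order type, lower bound.} Choose linearizations of $P$ and $Q$ of types $\alpha=\o(P)$ and $\beta=\o(Q)$. This uses the de Jongh–Parikh fact that the maximal order type is attained, which I would take as known from the cited sources. The product order $\alpha\times\beta$ of these well-orders extends the order of $P\times Q$. Hence every linearization of $\alpha\times\beta$ is a linearization of $P\times Q$, and $\o(P\times Q)\geq\o(\alpha\times\beta)$. It remains to show $\o(\alpha\times\beta)\geq\alpha\otimes\beta$, again by induction with the residual equation. For $(a,b)$, the set $\L{(a,b)}$ contains the points with first coordinate $<a$ or second coordinate $<b$. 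I would then choose $(a,b)$ so that this residual has order type at least any prescribed $\gamma<\alpha\otimes\beta$, exhibiting inside it a disjoint sum of smaller products whose natural sum exceeds $\gamma$.
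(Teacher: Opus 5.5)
The paper gives no proof of this lemma (it is quoted from de Jongh--Parikh), so I judge your proposal on its own. The well-quasi-order part and the height computation are fine.

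\textbf{The gap is in the upper bound for $\o$.} The ordinal inequality you single out as the main obstacle, $\alpha'\otimes\beta\oplus\alpha\otimes\beta'<\alpha\otimes\beta$, is false in general. For $\alpha=\beta=2$ and $\alpha'=\beta'=1$ both sides equal $4$. For $\alpha=\beta=3$ and $\alpha'=\beta'=2$ the left side is $12>9$.

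The residual estimate itself is too coarse, not just the arithmetic. Take $P=Q=2$ (the two-element chain) and $(p,q)$ the top element. Your bound gives $\l{(p,q)}\le 2\oplus 2=4$, but the residual has only $3$ elements, so the residual equation yields only $\o(P\times Q)\le 5$. The reflection into $(\L{p}\times Q)\sqcup(P\times\L{q})$ counts $\L{p}\times\L{q}$ twice, and a natural sum cannot subtract it again.

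The same failure occurs with infinite ordinals. For $P=\omega$, $Q=\omega+1$ and $(p,q)=(1,\omega)$, your bound is $(\omega+1)\oplus\omega^2=\omega^2+\omega+1$. This exceeds $\o(P)\otimes\o(Q)=\omega^2+\omega$. Splitting more carefully as $(\L{p}\times Q)\sqcup((P\setminus\L{p})\times\L{q})$ gives the same value here, because $\o(P\setminus\L{p})=\o(P)$.

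\textbf{What your argument does prove, and how to finish.} It handles the case where $\o(P)=\omega^\mu$ and $\o(Q)=\omega^\nu$ are additively indecomposable. There both summands of your bound lie below $\omega^{\mu\oplus\nu}$, and so does their natural sum. The general case reduces to this one, by induction on $\o(P)\oplus\o(Q)$.
\begin{itemize}
\item If $\o(P)$ is not a power of $\omega$, write $\o(P)=\gamma+\omega^\mu$ with $\gamma>0$ and $\omega^\mu$ the last Cantor term. Pick a finite bad sequence whose residual $D$ has $\o(D)=\gamma$; this is possible because every rank below the root of $Bad(P)$ is attained.
\item Then $U=P\setminus D$ is upward closed, so a bad sequence in $U$ followed by one in $D$ is still bad. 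This gives $\gamma+\o(U)\le\o(P)$, hence $\o(U)\le\omega^\mu$.
\item Consequently $\o(D),\o(U)<\o(P)$ and $\o(D)\oplus\o(U)\le\o(P)$.
\item Now $P\times Q\embeds(D\times Q)\sqcup(U\times Q)$. The induction hypothesis and distributivity give $\o(P\times Q)\le(\o(D)\oplus\o(U))\otimes\o(Q)\le\o(P)\otimes\o(Q)$.
\end{itemize}

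\textbf{A smaller issue in your lower-bound sketch.} Disjoint sums of smaller products inside $\L{(a,b)}$ do not suffice. In $2\times 2$ you need a residual of type $3$, and the L-shaped residual contains no such disjoint sum of total type $3$. You also need ordinary sums. One way is the same prefixing argument applied to the grid of Cantor-normal-form blocks of $\alpha\times\beta$, ordered by decreasing exponent. This reduces the question to $\o(\omega^\mu\times\omega^\nu)\ge\omega^{\mu\oplus\nu}$. There a single rectangular residual, such as $\L{(\omega^{\mu'}n,0)}=\omega^{\mu'}n\times\omega^\nu$, already does the job.
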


For a preorder $Q$, the \textit{finitary powerset} $\S(Q)$ is the set of finite subsets of $Q$ with the \textit{domination preorder}: $X \leq_{\S(Q)} Y \iff \forall a \in X \exists b \in Y (a \leq b)$. It is well-known that if $Q$ is a WQO, then $\S(Q)$ is also a WQO (note that the full powerset need not be, as shown in \cite{Rado1954}). A comprehensive survey on ordinal invariants of this structure is given in \cite{Abriola2023}.

\begin{lemma}[{\cite[theorem 3.3]{Abriola2023}}]\label{o(S)}
    For a WQO $Q$, $1+\o(Q) \leq \o(\S(Q)) \leq 2^{\o(Q)}$
\end{lemma}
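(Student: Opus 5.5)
The statement is cited from \cite[theorem 3.3]{Abriola2023}, so in the paper it can simply be invoked. A self-contained argument would go through the residual equation $\o(P) = \sup_{x \in P}(\o(\L[P]{x})+1)$ and induction on $\o(Q)$, with the two inequalities handled separately.

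\emph{Lower bound $1+\o(Q) \leq \o(\S(Q))$.} I would embed the lexicographic sum $1 + Q$ into $\S(Q)$ by a reflection. Send the bottom element to $\emptyset$ and each $q \in Q$ to the singleton $\{q\}$. The domination preorder gives $\{p\} \leq \{q\}$ iff $p \leq q$. Moreover $\emptyset$ lies below every set, and no singleton lies below $\emptyset$. Hence this map is a reflection, and in fact an order embedding. By \cref{embedding} and \cref{h o w(cup)},
\[\o(\S(Q)) \geq \o(1+Q) = 1+\o(Q).\]

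\emph{Upper bound $\o(\S(Q)) \leq 2^{\o(Q)}$.} I would prove this by induction on $\o(Q)$, using the residual equation. Fix a finite set $X = \{x_1,\dots,x_n\}$; it suffices to show $\o(\L[\S(Q)]{X}) < 2^{\o(Q)}$. A finite set $Y$ satisfies $Y \not\geq X$ iff some $x_i$ is dominated by no element of $Y$, that is, iff $Y \subseteq \L{x_i}$ for some $i$. Therefore
\[\L[\S(Q)]{X} \subseteq \bigcup_i \S(\L{x_i}),\]
and a union of finitely many sub-preorders reflects into their disjoint sum. By \cref{embedding} and \cref{h o w(cup)},
\[\o(\L[\S(Q)]{X}) \leq \bigoplus_i \o(\S(\L{x_i})) \leq \bigoplus_i 2^{\l{x_i}},\]
where the last step is the induction hypothesis, applicable since $\l{x_i} < \o(Q)$. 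The case $X=\emptyset$ is harmless: its lower set is empty.

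\emph{Main obstacle.} The remaining step is an ordinal-arithmetic fact: if $\beta_1,\dots,\beta_n < \alpha$, then $\bigoplus_i 2^{\beta_i} < 2^\alpha$. This does not follow from $2^{\beta_i} < 2^\alpha$ alone, since natural sums of smaller terms can reach $\omega^{\gamma}$ from below only if the summands are small. I would handle it as follows.
\begin{itemize}
\item Let $\beta = \max_i \beta_i$. Then $\bigoplus_i 2^{\beta_i} \leq 2^\beta \cdot n$, because the natural sum of terms that are each at most $2^\beta$ is bounded by $2^\beta \cdot n$; this follows by comparing Cantor normal forms.
\item If $\alpha = \beta+1$ is a successor, the bound $2^\beta \cdot n$ is not good enough, so I would refine the argument. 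Split according to $X$: the element $x_i$ with $\l{x_i} = \beta$ contributes a single term. Instead of estimating via maxima, I would bound $\o$ of the union more cleverly, by handling one generator at a time: writing $L = \L{x_1}$ and taking $Y \not\geq X$, note that the part of $Y$ inside $L$ and the rest can be split. This is the delicate point, and it is exactly where the cited reference's argument is needed.
\item If $\alpha$ is a limit, then $2^\beta \cdot n < 2^{\beta+1} \leq 2^\alpha$ and we are done.
\end{itemize}

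In practice I would rely on the citation for the successor case, which I expect to be the main obstacle.
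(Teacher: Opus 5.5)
The paper gives no proof of this lemma; it only cites \cite[theorem 3.3]{Abriola2023}. So falling back on the citation is exactly what the paper does, and your lower bound via the order embedding $1+Q \embeds \S(Q)$ ($\bot\mapsto\emptyset$, $q\mapsto\{q\}$) is correct. Your self-contained upper bound, however, has two defects.

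\emph{Limit case.} The step $2^\beta\cdot n < 2^{\beta+1}$ is false for $n\geq 2$, since $2^{\beta+1}=2^\beta\cdot 2$. The correct step is $2^\beta\cdot n\leq 2^\beta\cdot 2^k=2^{\beta+k}<2^\alpha$ with $2^k\geq n$, using $\beta+k<\alpha$ because $\alpha$ is a limit. Your first bullet holds only because $2^\beta=\omega^\gamma\cdot 2^k$ for $\beta=\omega\gamma+k$; for a general $\delta$, the $n$-fold natural sum of $\delta$ can exceed $\delta\cdot n$.

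\emph{Successor case.} This is a genuine gap: your third bullet is not an argument. The residual estimate $\bigoplus_i 2^{\l{x_i}}$ really does fail to be strict here. Already for a two-element antichain $Q=\{x_1,x_2\}$ and $X=Q$ it gives $2^1\oplus 2^1=2^2=2^{\o(Q)}$, because the natural sum ignores that the pieces $\S(\L{x_i})$ overlap.

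The missing idea is to bypass residuals of $\S(Q)$ in the successor case. Suppose $\o(Q)=\beta+1$.
\begin{enumerate}
\item The supremum in the residual equation is attained at some $m$ with $\l{m}=\beta$.
\item This $m$ is maximal. If $m<y$, then $\L{m}\cup\{m\}\subseteq\L{y}$, and the residual of $\L{m}\cup\{m\}$ at $m$ is $\L{m}$. Hence $\l{y}\geq\beta+1$, contradicting $\l{y}+1\leq\o(Q)$.
\item For maximal $m$, $Q$ is the disjoint union of $\L{m}$ and the equivalence class $[m]$. Then $T\mapsto(T\cap\L{m},\,T\cap[m])$ is a reflection $\S(Q)\embeds\S(\L{m})\times\S([m])$, and $\S([m])$ is a two-element chain.
\item By \cref{embedding,h o(times)} and the induction hypothesis applied to $\L{m}$, we get $\o(\S(Q))\leq 2^\beta\otimes 2=2^{\beta+1}$. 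The last equality holds again because $2^\beta=\omega^\gamma\cdot 2^k$.
\end{enumerate}
Together with the corrected limit case and $\o(\S(\emptyset))=1$, this completes the induction without appealing to the reference.
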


\begin{lemma}[{\cite[theorem 3.11]{Abriola2023}}]\label{w(S)}
    Let $Q$ be a WQO. If $\w(Q)$ is infinite, then $\w(\S(Q)) \geq 2^{\w(Q)}$. If $\w(Q) = n < \omega$, then $\w(\S(Q)) \geq \binom{n}{\lfloor n/2 \rfloor}$.
\end{lemma}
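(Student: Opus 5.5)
The two cases need different arguments, so I treat them separately. The finite case is a Sperner-type argument. For the infinite case I plan a transfinite induction on $\w(Q)$ using the residual equation for the width.

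\emph{Finite case.} Suppose $\w(Q)=n<\omega$. Then the tree $Ant(Q)$ has rank $n$, so $Q$ contains an antichain $A=\{a_1,\dots,a_n\}$. For subsets $X,Y\subseteq A$ we have $X\leq_{\S(Q)}Y$ if and only if $X\subseteq Y$, because distinct elements of $A$ are incomparable. Hence any two distinct subsets of $A$ of cardinality $\lfloor n/2\rfloor$ are incomparable in $\S(Q)$. This gives an antichain of size $\binom{n}{\lfloor n/2\rfloor}$ in $\S(Q)$. Since any antichain of size $m$ gives a branch of length $m$ in $Ant(\S(Q))$, we get $\w(\S(Q))\geq\binom{n}{\lfloor n/2\rfloor}$.

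\emph{Infinite case.} I will prove $\w(\S(Q))\geq 2^{\w(Q)}$ by induction on $\w(Q)$, working through the residual equation $\w(P)=\sup_{p}(\w(\K[P]{p})+1)$.

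The basic observation is the following. For $x\in Q$, every nonempty $Y\in\S(\K{x})$ is incomparable with $\{x\}$ in $\S(Q)$. Indeed, $\{x\}\not\leq Y$ since no element of $Y$ is above $x$. Also $Y\not\leq\{x\}$ since $Y$ is nonempty and no element of $Y$ is below $x$. Removing the empty set changes the width by at most a finite amount, which is absorbed since we are in the infinite case. So $\w(\K[\S(Q)]{\{x\}})\geq\w(\S(\K{x}))$, and the induction hypothesis (or the finite case, when $\w(\K{x})$ is finite) gives
$$\w(\S(Q))\geq\sup_x\bigl(2^{\w(\K{x})}+1\bigr).$$
When $\w(Q)$ is a limit ordinal, we have $\w(Q)=\sup_x\w(\K{x})$, and continuity of $2^{(\cdot)}$ gives $2^{\w(Q)}$ immediately.

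\emph{Successor step (main obstacle).} Suppose $\w(Q)=\beta+1$, with $\beta$ infinite. The argument above only yields $2^{\beta}+1$, whereas we need $2^{\beta}\cdot 2$. So here I must exploit more than the single point $\{x\}$.

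Choose $x$ with $\w(\K{x})=\beta$, and put $R=\K{x}$. I would like to concatenate two copies of large antichain trees.
\begin{itemize}
\item First, play a bad antichain of $\S(R)$ of rank $2^\beta$ consisting of sets that do not contain $x$.
\item Then, play a second family of sets of the form $Z\cup\{x\}$ with $Z\in\S(R)$, which should also realize rank $2^\beta$.
\end{itemize}
To make this work, I intend to use the isomorphism $\S(A\sqcup B)\cong\S(A)\times\S(B)$, applied to a suitable subset of $R\sqcup\{x\}$. The goal is to reduce the claim to the inequality
$$\w(\S(R)\times\{\emptyset<\{x\}\})\geq 2^\beta\cdot 2,$$
proved by a direct rank computation on the antichain tree.

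The delicate point is controlling comparabilities between the two layers, since $(Z,\{x\})$ may dominate earlier $(Y,\emptyset)$. I would try to handle this by choosing the second-layer antichain inside residuals $\K[\S(R)]{Y}$ of the first-layer choices, so that the second layer stays incomparable with everything chosen before.

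If this bookkeeping fails, the fallback is to appeal directly to the cited proof in \cite{Abriola2023}.
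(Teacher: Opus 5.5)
The paper gives no proof of this lemma; it is quoted from \cite{Abriola2023}, so your argument has to stand on its own. Your finite case (the Sperner family of $\lfloor n/2\rfloor$-subsets of an $n$-element antichain) and your limit case are correct, up to a small slip. When $\w(\K{x})=m$ is finite, the finite case gives only $\binom{m}{\lfloor m/2\rfloor}$, not $2^m$, so the displayed bound $\sup_x(2^{\w(\K{x})}+1)$ is not justified for such $x$. This is harmless. For $\w(Q)=\omega$, use $\w(\S(Q))\geq\w(Q)$ via singletons. For larger limits, cofinally many $x$ have infinite $\w(\K{x})$.

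The genuine gap is the successor step, which you leave as a plan. There are three problems.

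First, the reduction to $\w(\S(R)\times 2)\geq 2^\beta\cdot 2$ makes no progress. Since $R\cup\{x\}\cong R\sqcup\{x\}$ has width $\beta+1$ and $\S(R\sqcup\{x\})\cong\S(R)\times 2$, this inequality is just the lemma itself for the WQO $R\sqcup\{x\}$.

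Second, the bookkeeping you propose cannot work. Suppose every second-layer $Z$ is chosen incomparable in $\S(R)$ with every first-layer $Y$. Then all chosen sets form an antichain of $\S(R)$. Forgetting the $\{x\}$-coordinate maps these plays into $Ant(\S(R))$, so their rank is at most $\w(\S(R))$. That can be exactly $2^\beta$: for $R=\overline{1}+\overline{2}+\cdots$ one has $\w(\S(R))=\omega$. The extra rank must come from pairs that are comparable in $\S(R)$. Note that $Z\cup\{x\}$ and $Y\subseteq R$ are incomparable iff $Y\nleq Z$, which allows $Z<Y$. So the $x$-layer has to be played low and the other layer high. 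What is missing is an argument that after a long first phase, the complementary up-set (or down-set) of $\S(R)$ still has large width.

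One way to supply it:
\begin{itemize}
\item For a finite antichain $s$ of a WQO $P$, every element not equivalent to a member of $s$ lies in $U_s=P\setminus{\downarrow}s$ or in $D_s=P\setminus{\uparrow}s$. Subadditivity of width (provable from the residual equation) then gives $\w(P)\leq\w(U_s)\oplus\w(D_s)\oplus|s|$.
\item Take an infinite additively indecomposable $\alpha\leq\w(P)$. The prefix-closed subtrees $\{s:\w(U_s)\geq\alpha\}$ and $\{s:\w(D_s)\geq\alpha\}$ cover $Ant(P)$. The rank of such a union is at most the natural sum of the ranks, so one of them has rank $\geq\alpha$.
\item Play along that subtree in a maximal (resp.\ minimal) layer of a finite poset $B$. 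By induction on $|B|$ this gives $\w(P\times B)\geq\alpha\cdot|B|$.
\item Since $2^\beta$ need not be indecomposable, first peel off a finite antichain. Write $\beta=\omega\cdot\beta_1+m$ and iterate your choice of $x$ to get $Q\supseteq R_0\sqcup\overline{m+1}$ with $\w(R_0)=\omega\cdot\beta_1$. By the limit case, $\w(\S(R_0))\geq\omega^{\beta_1}$.
\item Hence $\w(\S(Q))\geq\w(\S(R_0)\times\S(\overline{m+1}))\geq\omega^{\beta_1}\cdot 2^{m+1}=2^{\beta+1}$.
\end{itemize}

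Third, your fallback of appealing to \cite{Abriola2023} only restates the lemma rather than proving it.
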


For a preorder $Q$, let $\S[2](Q) \subset \S(Q)$ be the subset of those non-empty sets that are not equivalent to any singleton set. If $Q$ is a WQO, then $\S[2](Q)$ is a WQO as a subset of a WQO. The proof from \cite[theorem 3.11]{Abriola2023} can easily be modified for $\S[2]$.

\begin{lemma}\label{w(S2)}
    Let $Q$ be a WQO. If $\w(Q)$ is infinite then $\w(\S[2](Q)) \geq 2^{\w(Q)}$. If $3 \leq \w(Q) = n < \omega$, then $\w(\S[2](Q)) \geq \binom{n}{\lfloor n/2 \rfloor}$.
\end{lemma}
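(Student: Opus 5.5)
The plan is to obtain both bounds from \cref{w(S)}, or from its proof, by exhibiting large antichains, or reflections, inside $\S[2](Q)$.

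\emph{Finite case.} Let $3 \leq \w(Q) = n < \omega$. For a WQO of finite width the rank of $Ant(Q)$ equals the maximal size of an antichain, so $Q$ contains an antichain $\{a_1,\dots,a_n\}$. Set $k = \lceil n/2 \rceil$; then $k \geq 2$ because $n \geq 3$, and $\binom{n}{k} = \binom{n}{\lfloor n/2 \rfloor}$. Consider the family of all $k$-element subsets of $\{a_1,\dots,a_n\}$. Two distinct such sets $X \neq Y$ are incomparable in the domination preorder: an $a_i \in X \setminus Y$ is dominated by no element of $Y$, since the $a_j$ are pairwise incomparable. None of them is equivalent to a singleton $\{r\}$. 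Indeed, such an $r$ would have to lie above every element of $X$ and below some element of $X$, which forces two distinct $a_i$ to be comparable. This yields an antichain of size $\binom{n}{\lfloor n/2 \rfloor}$ in $\S[2](Q)$, giving the finite bound.

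\emph{Infinite case, main tool.} For $q \in Q$ put $Q_q = \K{q}$, and define $f \colon \S(Q_q) \setminus \{\emptyset\} \to \S[2](Q)$ by $f(X) = X \cup \{q\}$.

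First, $f$ is a reflection. If $X \cup \{q\} \leq Y \cup \{q\}$, then every $x \in X$ is dominated by some element of $Y \cup \{q\}$. Since $x \perp q$, that element lies in $Y$, so $X \leq Y$.

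Second, $f(X)$ lands in $\S[2](Q)$ for $X \neq \emptyset$. A set equivalent to a singleton $\{r\}$ must contain an element that dominates all of its members. Here $q$ is incomparable with every element of $X$, so no such element exists.

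The empty set is the bottom of $\S(Q_q)$, so $\S(Q_q) = 1 + (\S(Q_q)\setminus\{\emptyset\})$. Then \cref{h o w(cup)} gives $\w(\S(Q_q)\setminus\{\emptyset\}) = \w(\S(Q_q))$ once the latter is infinite; in the finite case the loss is at most a constant, which is harmless here. Combining \cref{embedding} with \cref{w(S)}, we get $\w(\S[2](Q)) \geq 2^{\w(Q_q)}$ for each $q$ with $\w(Q_q)$ infinite. Since $\w(Q) = \sup_q (\w(Q_q)+1)$, when $\w(Q)$ is a limit ordinal the values $\w(Q_q)$ are cofinal in it. Continuity of $\beta \mapsto 2^\beta$ at limits then yields $\w(\S[2](Q)) \geq 2^{\w(Q)}$.

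\emph{Infinite successor case.} The remaining case is $\w(Q) = \beta+1$ with $\beta$ infinite. The reflection above only gives $2^\beta$, while we need $2^\beta \cdot 2$. My first attempt is a union bound. Write $\S(Q) = \S[2](Q) \cup S_1 \cup \{\emptyset\}$, where $S_1$ is the set of sets equivalent to singletons, a copy of $Q$ up to equivalence. Splitting an antichain of the union into its parts shows $\w(A \cup B) \leq \w(A) \oplus \w(B)$. Hence
\[2^{\beta+1} \leq \w(\S(Q)) \leq \w(\S[2](Q)) \oplus (\beta+2).\]
When $\beta$ is small compared with $2^\beta$, this forces $\w(\S[2](Q)) \geq 2^{\beta+1}$.

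This fails when $2^\beta$ is close to $\beta$, for example $\beta$ an epsilon number. I expect this to be the main obstacle. There I would instead rerun the inductive rank argument of \cite[theorem 3.11]{Abriola2023}. The observation to carry through is that the antichains built there can be chosen to consist of sets with at least two pairwise incomparable elements, so they already lie in $\S[2](Q)$. Failing that, one can adjoin the fixed element $q$ as above at each inductive step.
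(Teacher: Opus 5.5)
Your finite case is complete. For infinite width you take a different route from the paper: the paper's only justification is that the proof of \cite[theorem 3.11]{Abriola2023} adapts, whereas you use \cref{w(S)} as a black box. You do this through the reflection $X \mapsto X \cup \{q\}$ of $\S(\K{q}) \setminus \{\emptyset\}$ into $\S[2](Q)$, and it correctly gives $\w(\S[2](Q)) \geq 2^{\w(Q)}$ for limit $\w(Q) > \omega$. For $\w(Q) = \omega$, every $\w(\K{q})$ is finite, so your limit argument does not apply as written. There you should use your finite construction instead, which yields antichains of every finite size in $\S[2](Q)$.

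The gap is the infinite successor case $\w(Q) = \beta + 1$. The reflection only gives $2^\beta$. Your union bound gives the missing factor only when $\beta + 2$ lies below the leading term of $2^\beta$. Write $\beta = \omega\cdot\mu + m$, so that $2^\beta = \omega^\mu \cdot 2^m$. The union bound then fails exactly when $\mu = 1$ or $\mu$ is an epsilon number, i.e.\ for $\beta \in [\omega, \omega\cdot 2)$ or $\beta \in [\epsilon, \epsilon+\omega)$. This includes the most basic case $\w(Q) = \omega+1$. There you only get $\omega\cdot 2 \leq \w(\S[2](Q)) \oplus (\omega+2)$, which $\w(\S[2](Q)) = \omega$ already satisfies.

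For these cases your plan reduces to the same unverified claim the paper makes, namely that Abriola et al.'s argument adapts. So as it stands it is not a proof. The criterion you propose to track is also slightly off. A set containing two incomparable elements can still be equivalent to a singleton, for instance when a third element dominates both. What you need is that no element of the set dominates all the others.

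Your second fallback does not work. Adjoining the fixed $q$ is a reflection only on subsets of $\K{q}$, and it is exactly the map that caps what you can extract at $2^\beta$. It therefore cannot produce the missing factor $2$. What is missing is an actual argument for the successor step. One route: use the induction hypothesis for $\S[2](\K{q})$ to show that the suborder $\{X \cup \{q\} : \emptyset \neq X \in \S(\K{q})\} \cup \S[2](\K{q})$ of $\S[2](Q)$ has width at least $2^\beta \cdot 2$. This suborder is isomorphic to a two-layer subset of $2 \times \S(\K{q})$. That is the part of \cite{Abriola2023} that would genuinely have to be redone.
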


\subsection{Labeled trees}\label{trees}

A \textit{(finite) forest} is a finite ordered set $(F, \leq_F)$ such that for any $v \in F$, the subset $L_{<_F}(v)$ is linearly ordered. The elements of $F$ are called \textit{vertices}. If $u <_F v$, then $u$ is called an \textit{ascendant} of $v$, and $v$ is called a \textit{descendant} of $u$. A \textit{leaf} is a vertex that does not have any descendants.

A \textit{tree} is a forest that has a unique minimal element. A \textit{subtree} of a forest $(F, \leq_F)$ is any subset $T \subset F$ that is a tree (by induced order). A \textit{subtree of the vertex $v$} is the subtree $L_{\geq_F}(v)$.

For a set $Q$, a \textit{(finite) $Q$-labeled forest} (resp. \textit{tree}) is a forest (resp. tree) $F$ with a \textit{labeling function} $f \colon F \to Q$.

\begin{definition}
    For a preorder $Q$, the $h$-preorder on $Q$-labeled forests is defined by: $(F, f) \leq_h (G, g)$ if there exists a monotone function $h \colon F \to G$ such that $f(v) \leq g(h(v))$ for each vertex $v \in F$. The factor poset of $Q$-labeled forests (resp. trees) with $h$-preorder is denoted by $\F(Q)$ (resp. $\T(Q)$).
\end{definition}

This preorder, introduced in \cite{Selivanov2011}, is a variant of \textit{Kruskal's tree ordering} \cite{Kruskal1960}. Kruskal's ordering imposes the additional constraint that $h$ is strictly monotone and preserves infimums. We will denote by $\T_K(Q)$ the factor poset of $Q$-labeled trees with Kruskal's ordering. Kruskal's tree theorem states that if $Q$ is a WQO, then $\T_K(Q)$ is a WQO.

Another variant of tree ordering (which was considered in \cite{Montalban2017,Friedman2023}) is obtained if we require the function $h$ to be strictly monotone (but it does not have to preserve infimums); the resulting preorder we denote by $\T_s(Q)$. Obviously, the identity maps are reflections $\T(Q) \embeds \T_s(Q) \embeds \T_K(Q)$; hence if $Q$ is a WQO, then $\T(Q)$ and $\T_s(Q)$ are WQOs.

There have been several studies on ordinal invariants of these structures. In \cite{Schmidt2020}, the upper bound on $\o(\T_K(Q))$ is given. \cite{Friedman2023} proves the lower and upper bounds on $\o(\T_s(Q))$. The maximal order type and the width of the leaf-labeled variant of $\T_s$ are evaluated in \cite{Chopra2026}.

In the previous work \cite{Selivanov2025} (joint with Selivanov), we showed that the maximal order type and the width of $\T\left(\overline{k}\right)$ and $\F\left(\overline{k}\right)$ are equal to $\phi_{k-1}(0)$, where $\overline{k}$ is an antichain of size $k$. In this work, we generalize these results to an arbitrary WQO $Q$. The height of $\T(Q)$ is the subject of the forthcoming paper.

Now, we give some facts about the structure of trees and forests. We identify a labeled forest with its underlying set. Each $Q$-labeled forest $F$ has a unique representation $F \simeq t_1 \sqcup \dots \sqcup t_n$ where $t_1,\dots,t_n$ are pairwise incomparable $Q$-labeled trees, up to reordering and equivalence of $t_i$. Given a label $r \in Q$ and a forest $F \in \F(Q)$, let $rF$ be the tree with the underlying set $\{*\}+F$ and the label $r$ on $*$. It is easy to see that $h$-preorder satisfies the following recurrence:

\begin{lemma}
    \begin{itemize}
        \item $t_1 \sqcup \dots \sqcup t_n \leq F$ if and only if $t_i \leq F$ for all $i$.
        \item For a tree $t$, $t \leq t_1 \sqcup \dots \sqcup t_n$ if and only if $t \leq t_i$ for some $i$.
        \item For labels $r \leq s$, $r F \leq s G$ if and only if $F \leq s G$.
        \item For labels $r \nleq s$, $r F \leq s G$ if and only if $r F \leq G$.
    \end{itemize}
\end{lemma}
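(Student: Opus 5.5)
The plan is to prove each of the four items directly from the definition of $\leq_h$, using the fact that a monotone map from a tree sends the root to a vertex below the images of all other vertices. Throughout, a forest $F \simeq t_1 \sqcup \dots \sqcup t_n$ is a disjoint union whose components are pairwise $\leq_F$-incomparable. A monotone map on $F$ is therefore exactly a family of monotone maps on the components $t_i$, with no compatibility condition between different components.

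For the first item, the forward direction comes from restricting $h$ to $t_i$. For the converse, take witnesses $h_i \colon t_i \to F$ and glue them into $h = \bigcup_i h_i$. This map is monotone, since comparable vertices lie in the same $t_i$, and the label condition holds componentwise.

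For the second item, the backward direction comes from composing with the inclusion $t_i \subseteq t_1 \sqcup \dots \sqcup t_n$. For the forward direction, let $h \colon t \to \bigsqcup t_j$ be a witness and let $\rho$ be the root of $t$. Every $v \in t$ satisfies $\rho \leq_t v$, so $h(\rho) \leq h(v)$. Hence $h(v)$ lies in the same component as $h(\rho)$, say $t_i$, because distinct components are incomparable and, within the ambient forest, anything above a vertex of $t_i$ stays in $t_i$. So $h$ maps into $t_i$ and witnesses $t \leq t_i$.

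For the last two items, write $rF = \{*\} + F$ and $sG = \{\star\} + G$. Suppose first that $r \leq s$ and $F \leq sG$ via $h$. Extend $h$ by $* \mapsto \star$. This is monotone, since $\star$ is the minimum of $sG$, and the label condition holds at $*$ because $r \leq s$. Conversely, if $h \colon rF \to sG$ is a witness, its restriction to $F$ witnesses $F \leq sG$. This uses neither $r \leq s$ nor $r \nleq s$, so in general $rF \leq sG$ implies $F \leq sG$.

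Now suppose $r \nleq s$. If $rF \leq G$, compose with the inclusion $G \subseteq sG$. Conversely, let $h \colon rF \to sG$ be a witness. Then $h(*) \neq \star$, since otherwise $r \leq s$. So $h(*) \in G$, and every $h(v)$ with $v \in rF$ satisfies $h(v) \geq h(*) >_{sG} \star$, hence $h(v) \in G$. Thus $h$ corestricts to a witness of $rF \leq G$.

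None of these steps is a real obstacle. The only point needing care is the claim, used in the second item and at the end, that the upward closure of a vertex of a component stays inside that component (resp. inside $G$). This follows from the definition of the disjoint sum and of $\{*\}+F$, where the only new comparabilities are $* < F$.
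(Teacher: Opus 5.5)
Your proof is correct. You check each of the four items directly from the definition of $\leq_h$; the only substantive observations are that a monotone map from a tree lands in the component containing the image of its root, and that $h(*) \neq \star$ when $r \nleq s$. The paper gives no proof (it says only ``it is easy to see''), and your argument is the routine verification it leaves to the reader.
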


The first two points imply that $\F(Q)$ is isomorphic to $\S(\T(Q))$. Because of that, we identify the forest $t_1 \sqcup \dots \sqcup t_n$ with the set of trees $\{t_1, \dots, t_n\}$.

A $Q$-labeled forest $t$ is \textit{minimal}, if it is not $h$-equivalent to a $Q$-labeled forest with a lesser number of vertices. Any forest is equivalent to unique minimal forest, up to isomorphism of tree and equivalence of labels. We cite characterization of minimal forests from \cite{Selivanov2011}.
\begin{itemize}
    \item A tree $rF$ with $|F| \neq 1$ is minimal if and only if $F$ is minimal, and for any tree $t$ from $F$, the root of $t$ is not less than or equal to $r$.
    \item A tree $rt$ (where $t$ is a tree) is minimal if and only if $t$ is minimal, and the root of $t$ and $r$ are incomparable.
    \item A forest $F$ is minimal if and only if all its trees are minimal and pairwise incomparable.
\end{itemize}

In the rest of the article we implicitly assume that trees are minimal. In particular, the root of the tree is the root of its minimal representation (defined up to equivalence). Note that the root of $rf$ is not necessarily equal to $r$: for example, if $r < s$ are two labels, then the root of $rs$ is $s$, since $rs \simeq s$.

\section{Maximal order type}\label{section o}

First, we prove the lower bound for the maximal order type and demonstrate its tightness.
The trivial embedding $Q \embeds \T(Q)$ gives $\o(\T(Q)) \geq \o(Q)$; if $Q$ is an ordinal, then $\T(Q) \cong Q$; hence $\o(\T(Q)) = \o(Q)$, and the bound is tight. Since $\F(Q) \cong \S(\T(Q))$, by \cref{o(S)}, we have $\o(\F(Q)) \geq 2^{\o(Q)}$, which is also tight. In the rest of this section, we prove the upper bound on the maximal order type.

Since $\T(Q) \embeds \T_s(Q)$, the upper bound on $\o(\T(Q))$ can be obtained via an analogous bound for $\T_s(Q)$. Because of that, we cite the next lemma from \cite{Friedman2023}.

\begin{definition}
    For WQOs $I$ and $J$, let $\T_s(I, J)$ denote the subset of $\T_s(I \sqcup J)$ of those labeled trees whose leaves have labels from $J$ and non-leaves have labels from $I$.
\end{definition}

\begin{lemma}[{\cite[lemma 6.4]{Friedman2023}}]\label{o(T(I,J))}
    If $I,J$ are WQOs, then $\o(\T_s(I, J)) \leq \overline{\phi}_{\o(I)}(\o(J))$.
\end{lemma}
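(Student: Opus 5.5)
The plan is a double induction: an outer induction on $\o(I)$, and an inner induction on the size of a tree $t \in \T_s(I,J)$. Throughout, the bound is evaluated through the residual equation $\o(\T_s(I,J)) = \sup_t (\o(L_{\ngeq}(t))+1)$. It therefore suffices to show that for every tree $t$ the residual $L_{\ngeq}(t)$ reflects into a WQO whose maximal order type is strictly below $\overline{\phi}_{\o(I)}(\o(J))$. If $t$ is a single leaf with label $j \in J$, a tree $s \ngeq t$ has all its leaf labels in $L_{\ngeq,J}(j)$. So $L_{\ngeq}(t) \embeds \T_s(I, L_{\ngeq,J}(j))$, and by the inner induction on $\o(J)$ this has order type at most $\overline{\phi}_{\o(I)}(\o(L_{\ngeq}(j)))$. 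That value is below $\overline{\phi}_{\o(I)}(\o(J))$, because $\overline{\phi}_{\alpha}$ is strictly increasing; this strict monotonicity is exactly why the fixed-point-free variant is used.

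Now let $t = r(t_1,\dots,t_n)$ with $r \in I$. For strict embeddings, $t \leq s = q(s_1,\dots,s_m)$ holds if and only if either $t \leq s_i$ for some $i$, or $r \leq q$ and the forest $\{t_1,\dots,t_n\}$ maps strictly into the forest of proper descendants of the root of $s$. Take $s \ngeq t$ and cut it at every vertex whose label is $\geq r$. Above the cuts, all internal labels lie in $I' = L_{\ngeq,I}(r)$. Each subtree hanging at a cut vertex $v$ has label $\geq r$, yet does not contain $t$. Hence the forest below $v$ avoids $\{t_1,\dots,t_n\}$, and every tree in it lies in some $L_{\ngeq}(t_k)$ or fails the forest embedding. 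This gives a reflection
\[
L_{\ngeq}(t) \embeds \T_s(I', J \sqcup J'),
\]
where $J'$ consists of the "bad" subtrees rooted at a label $\geq r$. Each such subtree is encoded as a pair (label in $I$, forest of trees avoiding some $t_k$ or in some sense avoiding the forest of $t$). Concretely, $J' \embeds I \times \S\bigl(\bigsqcup_k L_{\ngeq}(t_k)\bigr)$, or a finite union of such pieces indexed by which $t_k$ fails to embed.

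By the inner induction on tree size, each $\o(L_{\ngeq}(t_k)) < \overline{\phi}_{\o(I)}(\o(J)) =: \theta$. Then \cref{h o w(cup)}, \cref{h o(times)} and \cref{o(S)} give $\o(J') \leq \o(I) \otimes 2^{\beta}$ for some $\beta < \theta$. Since $\o(I') < \o(I)$, the outer induction hypothesis yields
\[
\o(L_{\ngeq}(t)) \leq \overline{\phi}_{\o(I')}(\o(J) \oplus \o(J')).
\]

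The final step is ordinal arithmetic. The ordinal $\theta$ is a value of $\phi_{\o(I)}$, hence a fixed point of every $\phi_{\gamma}$ with $\gamma < \o(I)$, and it is multiplicatively and exponentially closed when $\o(I) \geq 1$. Therefore $\o(J) \oplus \o(J') < \theta$, which gives $\overline{\phi}_{\o(I')}(\o(J)\oplus\o(J')) < \phi_{\o(I')}(\theta) = \theta$.

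The main obstacle is making the decomposition of $J'$ precise. The forest condition for strict (not infimum-preserving) embeddings is not simply componentwise, so one must check that the trees failing it really reflect into a structure built from the smaller residuals $L_{\ngeq}(t_k)$ by finite products and finitary powersets. There are also boundary cases to handle, namely $\o(J)$ small and $\o(I)=1$, where $\theta$ may not be closed under exponentiation. I would try to treat these first, directly, via the definition of $\overline{\phi}$: the shift $\phi_\alpha(\beta+1)$ at fixed points is what should absorb the extra room needed there.
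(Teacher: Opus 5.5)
The paper gives no proof of this lemma; it quotes it from \cite{Friedman2023}, so there is no route to compare against. Your argument is nonetheless a correct proof. It is also the same residual decomposition the paper itself uses in its theorem $\o(\T(Q)) \leq \phi_{-1+\o(Q)}(0)$: internal labels from $L_{\ngeq}(r)$, new leaf labels built from $Q \times \S(\bigcup_{t\in F} L_{\ngeq}(t))$, and the same fixed-point arithmetic at the end. A few points need tidying, but none is a real gap.

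\textbf{The forest condition.} The step you flag as the main obstacle is not one. Strictly monotone maps carry no injectivity or infimum condition, so each $t_k$ maps independently into a single $u_i$. The forest condition is therefore exactly $\{t_1,\dots,t_n\} \leq_{\S} \{u_1,\dots,u_p\}$. Consequently, for a cut subtree $q(u_1,\dots,u_p)$ with $q \geq r$ and $t \nleq q(u_1,\dots,u_p)$, there is a \emph{single} $k$ with all $u_i \in L_{\ngeq}(t_k)$; your wording "every tree in it lies in some $L_{\ngeq}(t_k)$" has the quantifiers the wrong way round. The map $q(u_1,\dots,u_p) \mapsto (q,\{u_1,\dots,u_p\})$ then reflects $J'$ into $I \times \S(\bigcup_k L_{\ngeq}(t_k))$.

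\textbf{What $\overline{\phi}$ is for.} Strict monotonicity is not what the fixed-point-free variant buys, since $\phi_\alpha$ is already strictly increasing. What it buys is $\overline{\phi}_\alpha(\beta) > \beta$, and you use this silently when you assert $\o(J)\oplus\o(J') < \theta$. Closure of $\theta$ under $\oplus$, $\otimes$ and $2^{(\cdot)}$ only helps once you also know $\o(J) < \theta$ and $\o(I) < \theta$. The first follows from $\overline{\phi}_\alpha(\beta) > \beta$. The second follows from $\o(J) \geq 1$ (forced, because $t$ has a leaf) together with $\phi_\alpha(1) > \phi_\alpha(0) \geq \alpha$. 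With plain $\phi$ the lemma is false whenever $\o(J)$ is a fixed point of $\phi_{\o(I)}$ and $I \neq \emptyset$: a tree $i(j)$ has every leaf tree in its residual, so $\o(\T_s(I,J)) > \o(J)$.

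\textbf{No boundary cases.} The ones you anticipate do not arise. If $\o(I) \geq 1$, then $\theta$ lies in the range of $\phi_{\o(I)}$ and is an epsilon number, including when $\o(I)=1$. If $\o(I) = 0$, only leaf trees occur.

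\textbf{The induction.} State it as lexicographic on $(\o(I), \o(J))$, with an innermost induction on the size of $t$. The leaf case lowers $\o(J)$ with $I$ fixed, whereas the internal case lowers $\o(I)$ and replaces $J$ by $J \sqcup J'$.
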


If we reflect $\T(Q) \embeds \T_s(Q, Q)$ by the identity map, we get $\o(\T(Q)) \leq \o(\T_s(Q, Q)) \leq \overline{\phi}_{\o(Q)}(\o(Q))$, which is quite loose. If instead we reflect $\T(Q) \embeds \T_s(Q, \{*\})$ by appending a $*$-labeled leaf to each vertex, we get $\o(\T(Q)) \leq \o(\T(Q, \{*\})) \leq \overline{\phi}_{\o(Q)}(1)$, which is better but remains insufficient. Thus, greater accuracy is required to obtain a precise bound.

\begin{theorem}
    For a non-empty WQO $Q$, $\o(\T(Q)) \leq \phi_{-1 + \o(Q)}(0)$.
\end{theorem}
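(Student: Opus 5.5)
We prove the bound by induction on $\alpha = \o(Q)$, using the residual equation $\o(\T(Q)) = \sup_{t} (\l{t}+1)$. Write $\Phi(\alpha) = \phi_{-1+\alpha}(0)$. Since $\Phi(\alpha)$ is additively indecomposable and infinite for $\alpha \geq 1$, it suffices to show $\l{t} < \Phi(\alpha)$ for every minimal tree $t = rF$. Equivalently, the set of trees $s$ with $t \nleq s$ reflects into a WQO of maximal order type below $\Phi(\alpha)$.

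We first describe the trees $s \in \L{t}$. Let $s = pG$ be minimal. By the recurrence lemma, if $r \leq p$ then $t \leq s$ iff $F \leq pG$. If $r \nleq p$ then $t \leq s$ iff $t \leq G$, i.e. $t \leq g$ for some $g \in G$. In the second case this means every vertex of $s$ that dominates $t$ is labeled in $Q \setminus L_{\geq}(r) = L_{\ngeq}(r)$, which has $\o(L_{\ngeq}(r)) < \alpha$ by the residual equation for $Q$. So we decompose $s$ along the maximal subtrees (\emph{blocks}) whose vertex labels lie in $L_{\ngeq}(r)$. Whenever a vertex labeled $\geq r$ occurs, its forest of subtrees must avoid $F$, meaning some tree of $F$ fails to embed in each part. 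We then proceed by a secondary induction on the size (or on $\l{\cdot}$) of $F$, where each child tree $t_i$ of $F$ satisfies $\l{t_i} < \l{t}$.

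The plan is to build a reflection
\[
\L{t} \embeds \T_s\bigl(L_{\ngeq}(r) \sqcup A,\ B\bigr)
\]
(or a finite disjoint sum / product of such structures). Here the leaves carry labels from a WQO $B$ built from $\F$-avoiding sets $\L{t_i}$. These have order type below $\Phi(\alpha)$ by the secondary induction, using $\S$ and \cref{o(S)} together with closure of $\Phi(\alpha)$ under $2^{(\cdot)}$ and $\otimes$ when it is an epsilon number. The internal labels come from $L_{\ngeq}(r)$, of order type $\beta < \alpha$. Then \cref{o(T(I,J))} gives the bound $\overline{\phi}_{\beta}(\gamma)$ with $\gamma < \Phi(\alpha)$. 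When $\alpha = \beta+1$ is a successor, $\Phi(\alpha)$ is a fixed point of $\phi_{-1+\beta}$, and we need $\overline{\phi}_{\beta'}(\gamma) < \Phi(\alpha)$, where $\beta' = -1+\beta$ after a shift that we must arrange. When $\alpha$ is a limit, $\Phi(\alpha) = \sup_{\beta<\alpha}\Phi(\beta)$, and we need strictly less. The index shift from $\o(L_{\ngeq}(r))$ to $-1+\o(L_{\ngeq}(r))$ comes from replacing the $\T_s(I,J)$ bound by the sharper inductive bound for $\T$ itself. Concretely, we apply the induction hypothesis to $\T(L_{\ngeq}(r))$, obtaining order type at most $\Phi(\beta)$. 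We then treat $\L{t}$ as a tree-like composition over $\T(L_{\ngeq}(r))$ with substituted "holes". The case $\o(Q)=1$ (so $Q$ is a single class, $\T(Q)\cong 1$, and $\Phi(1)=1$) is the base case and is checked directly. For finite $\alpha$, the computation should parallel the antichain case of \cite{Selivanov2025}.

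The main obstacle is the ordinal bookkeeping in the successor step. The raw estimate from \cref{o(T(I,J))} overshoots by one Veblen index, because $\overline{\phi}_{\o(I)}$ rather than $\phi_{-1+\o(I)}$ appears. To fix this, we first attempt to encode a tree $s \in \L{t}$ as an element of $\T_s(I,J)$ with $\o(I) = -1+\beta$ rather than $\beta$. To do so, we peel off one maximal element (or a minimal element below $r$) of the label set into the leaf labels $J$. This mirrors the trick of appending a $*$-leaf, now applied relative to $r$. We must then check that $\gamma$ stays below the fixed point $\Phi(\alpha)$, so that $\overline{\phi}_{-1+\beta}(\gamma) < \Phi(\alpha)$ (since $\Phi(\alpha)$ is a fixed point of $\phi_{-1+\beta}$, and $\gamma < \Phi(\alpha)$ gives $\phi_{-1+\beta}(\gamma+1) < \Phi(\alpha)$). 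If this encoding fails, the fallback is to redo Friedman's proof of \cref{o(T(I,J))} directly for the $h$-preorder, exploiting that non-strict monotonicity collapses chains of comparable labels.
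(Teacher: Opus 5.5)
Your decomposition of a tree $s\in\L{rF}$ into a top part labeled in $\L{r}$ and the subtrees hanging off the first vertices labeled $\geq r$ is the right one; it is what the paper uses when $\o(Q)$ is infinite. But the proposal stops where a decision is needed, and the obstacle you single out is misplaced.

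For infinite $\alpha=\o(Q)$ there is no index shift to arrange. We have $-1+\alpha=\alpha$, $\phi_\alpha(0)$ is a common fixed point of all $\phi_\beta$ with $\beta<\alpha$, and $\l{r}<\alpha$. So the raw estimate $\overline{\phi}_{\l{r}}(\gamma)\leq\phi_{\l{r}}(\gamma+1)<\phi_\alpha(0)$ already works for every $\gamma<\phi_\alpha(0)$, for successor and limit $\alpha$ alike. This is exactly the paper's argument.

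The overshoot is real only for finite $\alpha=n\geq 2$. For an antichain $\l{r}=n-1$, and then $\overline{\phi}_{n-1}(\gamma)\geq\phi_{n-1}(0)$. Nothing in your sketch closes this gap. Moving a label into the leaf set $J$ does not give an encoding, since vertices carrying that label may have children. ``Redo Friedman's proof'' is not an argument. The missing idea is a one-line reduction: if $\o(Q)=n<\omega$, then $Q$ has exactly $n$ equivalence classes. Relabeling them by an $n$-element antichain is a reflection $\T(Q)\embeds\T(\overline{n})$, and $\o(\T(\overline{n}))\leq\phi_{n-1}(0)$ is already proved in \cite{Selivanov2025}.

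In the infinite case you still have to pin down the encoding, which you leave open. The internal labels must come from $\L{r}$ alone. If your extra part $A$ carries labels $\geq r$, the Friedman index can reach $\o(Q)$ (e.g.\ when $r$ is a minimum of $Q$), and the bound collapses.

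The paper reflects $\L{rF}$ into $\T_s\bigl(\L{r},\,1+Q\times\S\bigl(\bigcup_{t\in F}\L{t}\bigr)\bigr)$. Because of the factor $Q$, the leaf order type lies below $\phi_{\o(Q)}(0)$ only when $\o(Q)<\phi_{\o(Q)}(0)$. So the paper treats $\o(Q)=\phi_{\o(Q)}(0)$ separately: it observes that $\L{rF}\subseteq\T\bigl(\bigcup_{x\in X}\L{x}\bigr)$ for the label set $X$ of $rF$, and applies the outer induction on $\o(Q)$. Your sketch does not address this case.

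An alternative would avoid the split: label each boundary leaf by the whole subtree $s_w$. This subtree lies in $\bigcup_{t\in F}\L{t}$, because $w$ has a label $p\geq r$, and $F\leq s_w$ would give $rF\leq s_w\leq s$. Then no factor $Q$ appears, but the encoding has to be stated and the reflection checked.
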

\begin{proof}
    The case $\o(Q) = 1$ is trivial. If $2 \leq \o(Q) < \omega$, then $\T(Q) \embeds \T\left(\overline{\o(Q)}\right)$, and $\o(\T(Q)) \leq \o\left(\T\left(\overline{\o(Q)}\right)\right) \leq \phi_{\o(Q)-1}(0)$, where the last inequality is the statement of \cite[proposition 1]{Selivanov2025}. Now, let $o(Q)$ be infinite. By the residual equation, it is sufficient to prove that $\l{rF} < \phi_{\o(Q)}(0)$ for any tree $rF \in \T(Q)$.
    
    First, we prove this in the case $\o(Q) < \phi_{\o(Q)}(0)$ by induction on $rF$. Note that $\L{rF} \embeds \T\left(\L{r}, 1 + Q \times \S\left(\bigcup_{t \in F} \L{t}\right)\right)$, therefore:
    \begin{align*}
        \l{rF}
        &\leq \o\left(\T\left(\L{r}, 1 + Q \times \S\left(\bigcup_{t \in F} \L{t}\right)\right)\right)\\
        &\leq \overline{\phi}_{\l{r}}\left(1 + \o(Q) \otimes 2^{\bigoplus_{t \in F} \l{t}}\right)\\
        &< \phi_{\o(Q)}(0)
    \end{align*}
    
    Now, assume that $\o(Q) = \phi_{\o(Q)}(0)$. Let $X \subset Q$ be the set of labels of $rF$. Note that $\L{rF} \subset \T\left(\bigcup_{x \in X} \L[Q]{x}\right)$. Indeed, if the tree $t$ has a vertex with label from $\bigcap_{x \in X} L_\geq(x)$, then we can map all vertices of $rF$ to that vertex, so $rF \leq t$. Therefore, by induction, we have 
    $$\l{rF} \leq \o\left(\T\left(\bigcup_{x \in X} \L{x}\right)\right) \leq \phi_{\bigoplus_{x \in X} \l{x}}(0) < \o(Q) = \phi_{\o(Q)}(0).$$
\end{proof}

By \cref{o(S)}, we immediately obtain:

\begin{corollary}
    Let $Q$ be a WQO. If $\o(Q) \geq 2$, then $\o(\F(Q)) \leq \phi_{-1+\o(Q)}(0)$.
\end{corollary}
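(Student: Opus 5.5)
The corollary should follow by combining the upper bound of \cref{o(S)} with the preceding theorem, using $\F(Q) \cong \S(\T(Q))$. We get
$$\o(\F(Q)) = \o(\S(\T(Q))) \leq 2^{\o(\T(Q))} \leq 2^{\phi_{-1+\o(Q)}(0)},$$
because exponentiation with base $2$ is monotone in the exponent. It then remains to show that $2^{\gamma} = \gamma$ for $\gamma = \phi_{-1+\o(Q)}(0)$.

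Next I will check that $\gamma$ is a fixed point of $\beta \mapsto 2^\beta$, splitting into two cases according to $\o(Q)$.

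If $\o(Q) = 2$, then $-1+\o(Q) = 1$ and $\gamma = \phi_1(0) = \varepsilon_0$. Since $\varepsilon_0$ is an epsilon number, $\omega^{\varepsilon_0} = \varepsilon_0$. For a limit $\beta$ we have $2^{\omega\cdot\beta} = \omega^{\beta}$, and $\omega\cdot\varepsilon_0 = \omega^{1+\varepsilon_0} = \varepsilon_0$. Hence $2^{\varepsilon_0} = 2^{\omega\cdot\varepsilon_0} = \omega^{\varepsilon_0} = \varepsilon_0$.

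If $\o(Q) \geq 3$, then $-1+\o(Q) \geq 1$, so $\gamma$ lies in the image of $\phi_1$, i.e. it is an epsilon number (indeed a fixed point of $\phi_1$ whenever the index exceeds $1$). The same computation as in the first case gives $2^\gamma = \gamma$.

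The hypothesis $\o(Q) \geq 2$ is exactly what excludes the case $\gamma = \phi_0(0) = 1$. In that case $2^1 = 2 > 1$, and the bound fails: for $Q = 1$ we have $\o(\F(Q)) = 2$. Since the theorem requires $Q$ to be non-empty, and $\o(Q) \geq 2$ guarantees this, the theorem applies.

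The only real content is this ordinal fixed-point identity $2^\gamma = \gamma$ for epsilon numbers $\gamma$. I do not expect any obstacle here, since everything else is a direct citation of \cref{o(S)} and the theorem.
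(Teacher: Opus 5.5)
Your proof is correct and follows the paper's own argument: the paper obtains the corollary immediately from $\F(Q)\cong\S(\T(Q))$, the upper bound $\o(\S(P))\leq 2^{\o(P)}$ of \cref{o(S)}, and the preceding theorem, implicitly using that $\phi_{-1+\o(Q)}(0)$ is an epsilon number, hence a fixed point of $\beta\mapsto 2^{\beta}$, once $\o(Q)\geq 2$. Your case split is unnecessary, since $2^{\gamma}=2^{\omega\cdot\gamma}=\omega^{\gamma}=\gamma$ holds uniformly for every epsilon number $\gamma$.
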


\section{Width}\label{section w}

In contrast to the maximal order type, the width of $\T(Q)$ and $\F(Q)$ cannot be bounded above in terms of the width of $Q$ alone. Indeed, consider a WQO $Q$ and its minimal element $r$. We have the embedding $F \mapsto rF$ from $\S[2](Q)$ to $\T(Q)$, hence $\w(\T(Q)) \geq \w(\S[2](Q))$. In \cite[example 3.7]{Abriola2023}, it was shown that even in the case $\w(Q) = 2$, the width of $\S(Q)$ can be arbitrarily large. Therefore, the upper bounds of the widths of $\S[2](Q) = \S(Q) \setminus (\{\emptyset\} + Q)$, $\T(Q)$, and $\F(Q)$ do not exist. The main goal of this section is to establish the lower bound.

We will heavily use the following substructure, which is similar to the structure used in the proof of the lower bounds in \cite{Selivanov2025}.
\begin{definition}\label{w rec tree}
    Let $Q$ be an initial segment of a WQO $Q'$, and let $a \in Q' \setminus Q$. For a set of trees $X \subset \T(Q')$, we define $\T(Q, a, X)$ to be the set of those $Q'$-labeled trees that have the following properties:
    \begin{enumerate}
        \item The tree has at least one vertex labeled with $a$.
        \item If all ascendants of some vertex have labels from $Q$, then the vertex has a label from $Q \cup \{a\}$.
        \item If, in addition, that vertex has a label $a$, then it is not a leaf, and all its immediate descendants' subtrees are in $X$.
    \end{enumerate}
\end{definition}

We also define the following subsets of $\T(Q, a, X)$:
\begin{itemize}
    \item The subset $\T^Q(Q, a, X)$ of trees with root label in $Q$.
    \item For $r \in Q$, the subset $\T^r(Q, a, X)$ of trees with root label $r$.
    \item For $r \in Q$, the subset $\T^{\nleq r}(Q, a, X)$ of trees whose root label is not less than or equal to $r$.
\end{itemize}

The next two lemmas show how the widths of these structures relate to each other.

\begin{lemma}\label{restrict the root}
    For any WQO $Q$ and any $r \in Q$, $\w(\T^Q(Q, a, X)) \geq \w(\T^r(Q, a, X))$. Moreover, $\w(\T^Q(Q, a, X)) \geq \w(\T^{\nleq r}(Q, a, X))$ if the latter is at least $3$.
\end{lemma}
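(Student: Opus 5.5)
The plan is to construct reflections into $\T^Q(Q, a, X)$ and then apply \cref{embedding}. The first claim is immediate: $\T^r(Q, a, X) \subset \T^Q(Q, a, X)$ with the induced order, so the identity map is a reflection and $\w(\T^r(Q,a,X)) \leq \w(\T^Q(Q,a,X))$.

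For the second claim the inclusion $\T^{\nleq r}(Q,a,X) \subset \T^Q(Q,a,X)$ fails, because root labels of trees in $\T^{\nleq r}$ need not lie in $Q$. Indeed, the root label could be $a$ itself, or could lie outside $Q \cup \{a\}$; the latter is impossible by property 2, since the root has no ascendants. So the only problem is trees whose root is labeled $a$.

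I would therefore split $\T^{\nleq r}(Q,a,X)$ as $A \sqcup B$. Here $A$ consists of the trees whose root label lies in $Q$ and is $\nleq r$, so $A \subset \T^Q(Q,a,X)$. The set $B$ consists of the trees with root label $a$ (when $a \nleq r$).

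I would then map trees in $B$ into $\T^Q$ by prepending a root labeled $r$: $t \mapsto rt$, where $t$ is viewed as a one-tree forest. This is in $\T^Q(Q,a,X)$ since $r \in Q$ and the conditions on the $a$-vertices are unchanged. The tree $rt$ is minimal by the characterization of minimal trees, because $a \perp r$. Indeed, $a \nleq r$ is given, and $r \leq a$ would force $r \in$ the initial segment below $a$. This needs care: $Q$ being an initial segment of $Q'$ only tells us that $a \leq q$ implies nothing useful. What we actually need is that $r$ and $a$ are incomparable for the root of $rt$ to be $r$. If instead $r \leq a$, then $rt \simeq t$ and this map fails.

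Hence I would instead use the map $t \mapsto rt$ for $t \in B$ and $t \mapsto t$ for $t \in A$, and verify that it is a reflection using the recurrence lemma. For $t, t' \in B$: $rt \leq rt'$ iff $t \leq rt'$, and $t \leq rt'$ iff $t \leq t'$ because the root label $a$ satisfies $a \nleq r$. For $s \in A$, $t \in B$: $rt \leq s$ implies $t \leq s$, and $s \leq rt$ implies $s \leq t$ since the root of $s$ is $\nleq r$. The remaining case is $rt \leq s$ with $s \in A$ and $t \in B$, which holds iff $t \leq s$ whenever $r \leq$ the root of $s$. This is correct in either case, since it reduces to $t \leq s$ or $rt \leq$ a subforest of $s$, and both imply $t \leq s$. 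So the map is a reflection onto $\T^Q$ whenever $r \leq a$ does not collapse $rt$ to $t$.

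The main obstacle is exactly that degenerate collapse $rt \simeq t$, and the hypothesis ``at least $3$'' must be where it is handled. My guess is that instead of the single label $r$, one uses the hypothesis to produce two incomparable elements in $\T^{\nleq r}$, or works with a small forest. Concretely, I would attach to $t$ a root $r$ with two children, $t$ and a fixed tree $u \in A$ incomparable to $t$, and then adjust. Failing that, I would use that a width of at least $3$ guarantees that removing or merging a bounded number of elements, namely the trees in $B$, which behave like $L_{\geq}$-type pieces, does not decrease the width. This would use the natural-sum formula for disjoint sums from \cref{h o w(cup)} to compare $\w(A \sqcup B)$ with a reflection in which $B$ is injected into a copy lying above $A$.
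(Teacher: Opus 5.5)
\paragraph{There is a genuine gap: the case $r<a$.} Your first claim is correct. So is your reflection ($t\mapsto t$ on trees with root in $Q$, $t\mapsto rt$ on trees with root $a$) in the case $r\perp a$. But the setup only forces $a\nleq r$, because $Q$ is an initial segment. The case $r<a$ is allowed, and there your map collapses ($rt\simeq t$ has root $a\notin Q$).

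None of your fallbacks repairs this. A single tree $u$ incomparable to every member of an antichain need not exist. The $a$-rooted part is not a ``bounded'' piece either; it can carry the whole width.

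In fact no tree-by-tree fix is possible. Take $Q'=\{r<a\}\sqcup\{b_1,b_2\}$ with $b_1\perp b_2$, $Q=\{r\}$ and $X=\{b_1,b_2\}$.
\begin{itemize}
\item $\T^{\nleq r}(Q,a,X)=\{ab_1,ab_2,a\{b_1,b_2\}\}$, which has width $2$.
\item Every tree of $\T^Q(Q,a,X)$ is $rF$ with $F$ a set of at least two pairwise incomparable $a$-rooted trees. A single child would be absorbed since $r<a$, and an $r$-labelled child is excluded by minimality. So $\T^Q(Q,a,X)=\{r\{ab_1,ab_2\}\}$, which has width $1$.
\end{itemize}
More generally, for $Q=\{r\}$ and $r<a$ one gets exactly $\T^Q(Q,a,X)\cong\S[2](\T^{\nleq r}(Q,a,X))$. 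So any proof must go through an estimate for $\S[2]$, and that is precisely what the hypothesis ``at least $3$'' is for.

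\paragraph{The missing idea.} Map forests instead of trees: use $F\mapsto rF$ from $\S[2](\T^{\nleq r}(Q,a,X))$ to $\T^Q(Q,a,X)$.
\begin{itemize}
\item \emph{It lands in $\T^Q(Q,a,X)$.} $F$ consists of at least two pairwise incomparable trees whose roots are all $\nleq r$. By the characterization of minimal trees $rF$ with $|F|\neq 1$, $rF$ is minimal with root $r\in Q$, whatever the relation between $r$ and $a$. Conditions 1--3 are inherited from the trees of $F$.
\item \emph{It is a reflection.} By the recurrence for $\leq_h$, $rF\leq rG$ iff every $t\in F$ satisfies $t\leq rG$. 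Since the root of $t$ is $\nleq r$, this holds iff $t\leq G$, i.e.\ iff $F\leq G$.
\end{itemize}
\Cref{embedding,w(S2)} then give
$\w(\T^Q(Q,a,X))\geq\w(\S[2](\T^{\nleq r}(Q,a,X)))\geq\w(\T^{\nleq r}(Q,a,X))$.
The last step uses $2^{\beta}\geq\beta$ for infinite $\beta$ and $\binom{n}{\lfloor n/2\rfloor}\geq n$ for $n\geq 3$. This argument treats $a$-rooted and $Q$-rooted trees uniformly, so your case split is unnecessary.
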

\begin{proof}
    The first claim immediately follows from \cref{embedding}. For the second claim, consider a reflection $F \mapsto rF$ from $\S[2](\T^{\nleq r}(Q, a, X))$ to $\T^Q(Q, a, X)$. By \cref{w(S2),embedding}, $\w(\T^Q(Q, a, X)) \geq \w(\S[2](\T^{\nleq r}(Q, a, X))) \geq \w(\T^{\nleq r}(Q, a, X))$.
\end{proof}

\begin{lemma}\label{w(T) is epsilon}
    Let $Q$ be a WQO, $r \in Q$, $\k{r} > 0$. If one of the widths $\w(\T^r(Q, a, x))$ and $\w(\T^{\nleq r}(Q, a, X))$ is at least 4, then they are equal and are epsilon numbers or equal to $\omega$.
\end{lemma}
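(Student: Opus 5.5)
The plan is to set $A = \T^r(Q, a, X)$ and $B = \T^{\nleq r}(Q, a, X)$ and compare them by reflections in both directions. Combined with the finitary-powerset bound of \cref{w(S2)}, this should give both the equality and the closure property.

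\emph{From $B$ to $A$.} For a finite antichain $F \subset B$ that is not equivalent to a singleton, I map $F \mapsto rF$. Every tree in $F$ has root label $\nleq r$, so by the minimality characterization $rF$ is minimal with root $r$. Hence $rF \in A$; the conditions of \cref{w rec tree} are inherited because the new root has label $r \in Q$. The recurrence lemma gives $rF \leq rG \iff F \leq rG$, and for $t$ with root label $\nleq r$ we have $t \leq rG \iff t \leq G$. So this is a reflection $\S[2](B) \embeds A$, and \cref{w(S2),embedding} give
\[
\w(A) \geq \w(\S[2](B)) \geq 2^{\w(B)}
\]
when $\w(B)$ is infinite, and $\w(A) \geq \binom{n}{\lfloor n/2\rfloor}$ when $\w(B) = n \geq 3$ is finite.

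\emph{From $A$ to $B$.} Here I use $\k{r} > 0$: pick $s \in Q$ with $s \perp r$, so $s \nleq r$. I map $t \mapsto st$. When $t$ has root $r$, the tree $st$ is minimal with root $s \nleq r$, so it lies in $B$. The recurrence lemma gives $st \leq st' \iff t \leq st'$, and since $r \nleq s$ this is equivalent to $t \leq t'$. So this is a reflection $A \embeds B$, and $\w(B) \geq \w(A)$. Conversely, feeding antichains of $A$ into the first map gives $\S[2](A) \embeds B$ via $F \mapsto sF$. For this I need the roots of trees in $F$ to be $\nleq s$, which holds because those roots are $r$ and $r \nleq s$. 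Hence $\w(B) \geq 2^{\w(A)}$ in the same sense.

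Now suppose one of $\w(A), \w(B)$ is at least $4$. The two cross inequalities, $\w(A) \geq f(\w(B))$ and $\w(B) \geq f(\w(A))$ with $f(n) = \binom{n}{\lfloor n/2\rfloor}$ for finite $n \geq 3$ and $f(\alpha) = 2^{\alpha}$ for infinite $\alpha$, force both widths to be infinite. For finite values, iterating $f$ strictly increases anything $\geq 4$, so two finite widths would have to grow without bound. Once both are infinite, $\w(A) \geq 2^{\w(B)} \geq \w(B) \geq 2^{\w(A)} \geq \w(A)$, so the widths are equal to some $\alpha$ with $2^\alpha = \alpha$. The infinite fixed points of $\alpha \mapsto 2^\alpha$ are exactly $\omega$ and the epsilon numbers: writing $\alpha = \omega\gamma + n$ gives $2^\alpha = \omega^\gamma 2^n$, which forces $n = 0$ and $\omega^\gamma = \omega\gamma$, so $\gamma = 1$ or $\gamma = \omega^\gamma$.

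The main obstacle is the finite case. I need that $f(n) > n$ for $n \geq 4$ actually propagates, that is, that the case where one width is $\geq 4$ and the other is smaller is handled. The second map only needs $\w(A) \geq 3$, but I must check small values (a width of $1$ or $2$ on one side). For this I will first use the direct reflection $A \embeds B$ to get $\w(B) \geq \w(A)$, and then argue that $\w(B) \geq 4$ forces $\w(A) \geq f(\w(B)) > \w(B) \geq \w(A)$ on finite values. That is a contradiction unless both widths are infinite.
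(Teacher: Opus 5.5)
Your proposal is correct and follows the paper's proof. You use the same two reflections, $F \mapsto rF$ from $\S[2](\T^{\nleq r}(Q,a,X))$ into $\T^r(Q,a,X)$ and $t \mapsto st$ back, to get $\w(\T^{\nleq r}(Q,a,X)) \geq \w(\T^r(Q,a,X)) \geq \w(\S[2](\T^{\nleq r}(Q,a,X)))$; you then rule out the finite case via $\binom{n}{\lfloor n/2\rfloor} > n$ for $n \geq 4$ and conclude $\alpha = 2^\alpha$. Your extra reflection $F \mapsto sF$ and the ``iterating $f$'' remark become unnecessary once you use the direct chain of your final paragraph, although your identification of the infinite fixed points of $\alpha \mapsto 2^\alpha$ fills in a step the paper leaves unstated.
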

\begin{proof}
    Let $s \in Q$ be incomparable with $r$. We have the reflections $\S[2](\T^{\nleq r}(Q, a, X)) \embeds \T^r(Q, a, X)$ and $\T^r(Q, a, X) \embeds \T^{\nleq r}(Q, a, X)$ given by $F \mapsto rF$ and $t \mapsto st$, respectively. By \cref{embedding}, $\w(\T^{\nleq r}(Q, a, X)) \geq \w(\T^r(Q, a, X)) \geq \w(\S[2](\T^{\nleq r}(Q, a, X)))$. If $\w(\T^{\nleq r}(Q, a, X))$ is finite, then it is at least 4 and $\w(\S[2](\T^{\nleq r}(Q, a, X))) > \w(\T^{\nleq r}(Q, a, X))$, a contradiction. Hence, it is infinite and $\w(\T^{\nleq r}(Q, a, X)) = \w(\T^r(Q, a, X)) =2^{\w(\T^{\nleq r}(Q, a, X))}$, which proves the claim.
\end{proof}

And the following lemma formalizes the observation that $\T(Q, a, X)$ is ``strictly monotone'' by $X$.

\begin{lemma}\label{w(T) is monotone by X}
    Let $Q$ be a WQO, $r \in Q$, $\k{r} > 0$, and let $a$ be not less than or equal to any element of $Q$.
    Suppose that $X$ is a non-empty set of trees, and for all trees in $X$, the root label is not less than or equal to $a$.
    If $\w(X) \geq 4$, then $\w(\T^{\nleq r}(Q, a, X)) \geq \w(X)$, $\w(\T^r(Q, a, X)) > \w(\T^r(Q, a, \K{x}))$ and $\w(\T^{\nleq r}(Q, a, X)) > \w(\T^{\nleq r}(Q, a, \K{x}))$.
\end{lemma}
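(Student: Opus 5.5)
We read the statement with $x \in X$, so that $\K{x} = \K[X]{x}$ is the set of trees of $X$ incomparable with $x$. Since $\k{r} > 0$, fix once and for all a label $s \in Q$ with $s \perp r$. The plan has two parts. For the inequality $\w(\T^{\nleq r}(Q,a,X)) \geq \w(X)$ we build a reflection from $X$ into $\T^{\nleq r}(Q,a,X)$ and apply \cref{embedding}. For each strict inequality we use the residual equation $\w(P) = \sup_{y\in P}(\w(L_{\bot,P}(y))+1)$. This reduces it to finding a single tree $y$ in the big structure whose incomparability set contains a reflected copy of the smaller structure.

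\emph{Step 1 (the reflection).} Define $t \mapsto s(at)$, the tree with root $s$, a child labeled $a$, and below it the tree $t$. It lies in $\T^{\nleq r}(Q,a,X)$, because vertices of $t$ have the ascendant $a \notin Q$, so conditions 2 and 3 are only checked at $s$ and at $a$. To see that it reflects, suppose $s(at) \leq s(at')$ and look at where the $a$-vertex goes. Since $a$ is not below any element of $Q$, it is mapped either to the $a$-vertex or into $t'$.
\begin{itemize}
\item If it goes into $t'$, then $at \leq t'$, hence $t \leq t'$.
\item If it goes to the $a$-vertex, then $t \leq at'$. The root label of $t$ is not $\leq a$, so the recurrence lemma gives $t \leq t'$.
\end{itemize}
Hence $X \embeds \T^{\nleq r}(Q,a,X)$, and \cref{embedding} gives the first claim.

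\emph{Step 2 (strict inequalities).} For the $r$-rooted version take $y = r(ax) \in \T^r(Q,a,X)$; for the $\nleq r$ version take $y = s(ax)$. Every tree of $\T^r(Q,a,\K{x})$ already lies in $\T^r(Q,a,X)$, since $\K{x} \subset X$. The goal is to show that every such $z$, possibly after a fixed reflecting modification, is incomparable with $y$. Then \[\w(\T^r(Q,a,X)) \geq \k{y}+1 \geq \w(\T^r(Q,a,\K{x}))+1,\] and similarly in the $\nleq r$ case.

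The two directions of incomparability would go as follows.
\begin{itemize}
\item $y \nleq z$: the $a$-vertex of $y$ must land on an $a$-labeled vertex of $z$ reached through $Q$-labels, or deeper. At such an $a$-vertex of $z$ all child subtrees are in $\K{x}$, so $x$ would have to embed below a tree incomparable with $x$. This must be excluded using that root labels of $X$ are not $\leq a$.
\item $z \nleq y$: the first $a$-vertex of $z$ on a $Q$-path must go to the $a$ of $y$ or into $x$. In the first case its child subtrees (all in $\K{x}$) would be $\leq x$, using again that their roots are not $\leq a$, which contradicts incomparability.
\end{itemize}
We may also use the hypothesis $\w(X) \geq 4$ together with \cref{w(T) is epsilon}, if needed, to know that these widths are infinite. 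This makes the ``$+1$'' genuinely strict, and lets us pass between the $r$ and $\nleq r$ versions by $t \mapsto st$.

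\emph{Main obstacle.} The delicate point is the incomparability in Step 2: embeddings can send the $a$-vertex or the whole of $x$ deep into $Q'$-labeled parts of $z$, and the minimal-form root of $r(ax)$ may differ from $r$ if $r \leq a$. If the direct choice of $y$ fails, I would first try replacing each $z$ by a modified tree, such as adjoining an extra branch $s(a x')$ for some fixed $x' \in X$ at the root via the $\S[2]$-type reflection $F \mapsto rF$. This keeps the map a reflection while forcing incomparability with $y$.
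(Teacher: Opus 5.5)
Your proposal is correct and follows the paper's route: a reflection $X \embeds \T^{\nleq r}(Q,a,X)$ for the first claim, and for the strict inequality a single witness tree whose incomparability set contains $\T^{\nleq r}(Q,a,\K{x})$, justified by the same two incomparability arguments you sketch (both rely on the roots of trees in $X$ not being $\leq a$). The paper is simpler in two ways: it uses $ax$ itself as both the reflection image and the witness, since $ax$ already lies in $\T^{\nleq r}(Q,a,X)$ because $a \nleq r$, so no auxiliary $s$ is needed; and it obtains the $r$-rooted inequality from the $\nleq r$ one via \cref{w(T) is epsilon}. You should take that second route too, since your witness $r(ax)$ collapses to $ax$ when $r \leq a$, and your vague fallback in the last paragraph is then unnecessary (also, the ``$+1$'' needs no infiniteness, as $\alpha+1>\alpha$ always).
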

\begin{proof}
    The first claim follows from the reflection $x \mapsto ax$ from $X$ to $\T^{\nleq r}(Q, a, X)$.
    It implies that $\w(\T^{\nleq r}(Q, a, X))$ is at least 4, so by \cref{w(T) is epsilon}, the second and the third statements are equivalent.
    
    For $x \in X$, note that $\K[\T^{\nleq r}(Q, a, X)]{a x} \supset \T^{\nleq r}(Q, a, \K{x})$. Indeed, every tree $t$ from the latter set contains a subtree $a y$ with $y \perp x$, hence $t \nleq a x$. Conversely, $x$ is not less than or equal to any subtree of $t$ of the form $a y$, so $a x \nleq t$. Now the third claim follows from \cref{embedding}.
\end{proof}

Now, we define a modification of the Veblen function, which will serve as a lower bound in our intermediate estimates. It is defined when the second argument is at least 4.
\begin{align*}
    f_1(4) &= \omega\\
    f_1(n+5) &= \epsilon_n &&\text{for } n < \omega\\
    f_\alpha(4+\beta) &= \phi_\alpha(\beta) &&\text{for } \alpha > 1 \text{ or } \beta \geq \omega
\end{align*}

We will need the characterization of $f_\alpha(\beta)$ as the fixed point. We give it without proof, since it is an easy exercise on ordinal arithmetic.

\begin{lemma}\label{f def}
    For $\alpha \geq 1, \beta \geq 4$, $f_\alpha(\beta)$ is the least ordinal with the following properties:
    \begin{itemize}
        \item It is an epsilon number or equal to $\omega$.
        \item It is greater than $f_\alpha(\beta')$ for all $4 \leq \beta' < \beta$.
        \item It is a fixed point of $f_{\alpha'}$ for all $1 \leq \alpha' < \alpha$.
    \end{itemize}
\end{lemma}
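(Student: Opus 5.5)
We have to prove Lemma \ref{f def}: for $\alpha \geq 1$ and $\beta \geq 4$, $f_\alpha(\beta)$ is the least ordinal that (i) is an epsilon number or $\omega$, (ii) exceeds $f_\alpha(\beta')$ for all $4 \leq \beta' < \beta$, and (iii) is a fixed point of $f_{\alpha'}$ for all $1 \leq \alpha' < \alpha$. Write $P_\alpha(\beta)$ for the class of ordinals satisfying (i)--(iii). The plan is to reduce everything to the standard characterization of the Veblen hierarchy. For $\alpha > 0$, $\phi_\alpha$ enumerates the common fixed points of all $\phi_{\alpha'}$ with $\alpha' < \alpha$. Hence $\phi_\alpha(\gamma)$ is the least ordinal that is a common fixed point of all $\phi_{\alpha'}$ with $\alpha'<\alpha$ and exceeds $\phi_\alpha(\gamma')$ for all $\gamma'<\gamma$. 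We will split into cases according to the definition of $f$.

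\emph{Case $\alpha = 1$.} Condition (iii) is vacuous. The values are $f_1(4)=\omega$, $f_1(5)=\epsilon_0$, $f_1(n+5)=\epsilon_n$, and $f_1(4+\beta)=\epsilon_\beta$ for infinite $\beta$. The set of ordinals that are $\omega$ or epsilon numbers is enumerated in increasing order by $\omega, \epsilon_0, \epsilon_1, \dots, \epsilon_\omega, \dots$. Its $\gamma$-th element is $\omega$ for $\gamma=0$, $\epsilon_{n}$ for $\gamma = n+1$, and $\epsilon_\gamma$ for infinite $\gamma$. This is exactly $f_1(4+\gamma)$, because $4+\gamma$ absorbs the finite shift in the same way. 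So $f_1$ is the increasing enumeration of this set, and the least element of $P_1(\beta)$ is the next element of the set above all earlier values, namely $f_1(\beta)$. One should also note that the enumeration is continuous at limits, so nothing is skipped.

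\emph{Case $\alpha \geq 2$.} Here $f_\alpha(4+\gamma)=\phi_\alpha(\gamma)$. The key observation is that, for $\alpha\ge 2$, the fixed points of $f_1$ are exactly the fixed points of $\phi_1$, i.e. the $\phi_2$-points. Indeed, a fixed point $\xi$ of $f_1$ satisfies $f_1(\xi)=\xi$ with $\xi$ infinite. Any $\xi \ge \omega$ satisfies $4+\xi=\xi$, so $\xi = f_1(\xi)=\epsilon_\xi=\phi_1(\xi)$ once $\xi$ is infinite (and $\xi=\omega$ is not a fixed point since $f_1(\omega)=\epsilon_\omega$). For $1<\alpha'<\alpha$, the fixed points of $f_{\alpha'}$ are likewise the fixed points of $\phi_{\alpha'}$, since $4+\xi=\xi$ for infinite $\xi$. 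So (iii) says that $\xi$ is a common fixed point of $\phi_{\alpha''}$ for $1\le\alpha''<\alpha$, and this automatically gives (i) and fixed-pointhood of $\phi_0$. Condition (ii) becomes $\xi>\phi_\alpha(\gamma')$ for $\gamma'<\gamma$; finite $\beta'$ with $\beta' < 4$ do not occur, since $\beta'\ge4$ ranges exactly over $4+\gamma'$. The standard characterization then gives $\min P_\alpha(4+\gamma)=\phi_\alpha(\gamma)$.

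The main (mild) obstacle is bookkeeping the finite offsets in the case $\alpha=1$ and in the identification of the fixed points of $f_1$ with those of $\phi_1$. The point to check there is that $f_1(\xi) = \phi_1(\xi)$ for all infinite $\xi$. For this I would verify $4+\xi=\xi$ and that the finite-index clause $f_1(n+5)=\epsilon_n$ only affects finite arguments.
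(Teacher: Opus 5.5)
Your argument is correct, and since the paper gives no proof (it calls the lemma an easy exercise), it supplies exactly the routine verification that was omitted: for $\alpha=1$ you identify $f_1(4+\gamma)$ with the $\gamma$-th element of $\{\omega\}\cup\{\text{epsilon numbers}\}$, and for $\alpha\geq 2$ the observation $f_{\alpha'}(\xi)=\phi_{\alpha'}(\xi)$ for infinite $\xi$ reduces (i)--(iii) to the defining characterization of $\phi_\alpha$. The appeal to continuity in the case $\alpha=1$ is unnecessary, because the least element of a class lying strictly above the first $\gamma$ values of its increasing enumeration is always the $\gamma$-th value, whether or not the enumeration is continuous.
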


Now, we are ready to state and prove the bound on $\w(\T(Q))$ via $\o(Q)$.

\begin{lemma}\label{w via o}
    Let $Q$ be a non-empty WQO and let $a$ be not less than or equal to any element of $Q$.
    Let $*$ be incomparable with $a$ and all elements of $Q$.
    Suppose that for all trees in $X$, the root label is not less than or equal to $a$.
    If $\w(X) \geq 4$, then $\w(\T^*(Q\sqcup\{*\}, a, X)) \geq f_{\o(Q)}(\w(X))$.
\end{lemma}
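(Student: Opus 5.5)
Write $W(Q,X) := \w(\T^*(Q\sqcup\{*\}, a, X))$. The plan is a double induction: an outer induction on $\o(Q)$ and an inner induction on $\w(X)$. The target is to show that $W(Q,X)$ has the three properties of \cref{f def} for $\alpha = \o(Q)$ and $\beta = \w(X)$. Since $f_\alpha(\beta)$ is the least ordinal with those properties, it suffices to show that $W(Q,X)$ is an epsilon number or $\omega$, that it is greater than $f_{\o(Q)}(\beta')$ for all $4\leq\beta'<\w(X)$, and that it is at least a fixed point of each $f_{\alpha'}$ with $\alpha'<\o(Q)$. More precisely, for the last property I will show $W(Q,X)\geq f_{\alpha'}(W(Q,X))$; for epsilon numbers the reverse inequality $f_{\alpha'}(\gamma)\geq\gamma$ is automatic.

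For the first property, I apply \cref{w(T) is epsilon} in $Q'=Q\sqcup\{*\}$ with $r=*$. Since $Q$ is non-empty, every element of $Q$ is incomparable with $*$, so $\k{*}>0$. The reflection $x\mapsto ax$ together with the hypothesis on $X$ gives $\w(\T^{\nleq *})\geq \w(X)\geq 4$; this is the first claim of \cref{w(T) is monotone by X}. Hence \cref{w(T) is epsilon} makes $W(Q,X)$ an epsilon number or $\omega$.

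For the second property, take $4\leq\beta'<\w(X)$. By the residual equation there is $x\in X$ with $\w(\K{x})\geq\beta'$; more carefully, I choose $x$ with $\w(\K{x})$ cofinal below $\w(X)$. Then \cref{w(T) is monotone by X} gives $W(Q,X)>W(Q,\K{x})$. By the inner induction, $W(Q,\K{x})\geq f_{\o(Q)}(\w(\K{x}))\geq f_{\o(Q)}(\beta')$, using monotonicity of $f$ in the second argument. The hypothesis that roots are $\nleq a$ passes to subsets, so the induction hypothesis applies.

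For the third property, which I expect to be the main obstacle, fix $\alpha'<\o(Q)$. By the residual equation, choose $q\in Q$ with $\l{q}\geq\alpha'$ and let $Q_0=\L{q}$. The idea is to build a new set $Y$ of trees that uses $q$ as a fresh separator above $Q_0$, in the role of $a$. Concretely, take $Y=\T^*(Q\sqcup\{*\},a,X)$ itself, or a suitable subset of trees with root $*$, and embed $\T^*(Q_0\sqcup\{*\}, q, Y')$ into $\T^*(Q\sqcup\{*\},a,X)$. Here $Y'$ is obtained by prefixing the trees of $Y$ with a label not $\leq q$. The check is that this map is a reflection. Elements of $Q_0$ are $\ngeq q$, so $q$ is not $\leq$ any element of $Q_0$ in the relevant sense, and vertices along $Q_0$-paths satisfy the conditions of \cref{w rec tree}. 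The verification uses the recurrence lemma for the $h$-preorder, and this is where the care is needed: $Q_0$ need not be an initial segment, and roots must stay $\nleq q$. If this works, the outer induction gives
\[
W(Q,X)\geq f_{\o(Q_0)}(\w(Y'))\geq f_{\alpha'}(W(Q,X)),
\]
which is the fixed-point property. If $\o(Q_0)$ overshoots $\alpha'$, monotonicity of $f$ in its first argument handles that. The degenerate case $\alpha'<1$ is vacuous.

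Finally, combining the three properties with \cref{f def} yields $W(Q,X)\geq f_{\o(Q)}(\w(X))$. Formally, I argue by contradiction. Suppose $W(Q,X)<f_{\o(Q)}(\w(X))$. Then $W(Q,X)$ satisfies all the defining properties, which contradicts minimality. The only exception is if $W(Q,X)$ fails to exceed some $f_{\alpha'}$-value, and that case is ruled out by the second and third properties. The base case of the outer induction is $\o(Q)=1$. There the third property is vacuous, and one only needs the epsilon/$\omega$ property and strict growth, matching $f_1(4)=\omega$ and $f_1(n+5)=\epsilon_n$.
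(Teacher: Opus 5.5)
Your proposal is correct and is essentially the paper's proof. It uses the same double induction on $\o(Q)$ and $\w(X)$ to verify the three conditions of \cref{f def}: the first two come from \cref{w(T) is epsilon,w(T) is monotone by X}, and the fixed-point condition comes from the containment $\T^*(Q\sqcup\{*\},a,X)\supset\T^*(\L{q}\sqcup\{*\},q,\T^*(Q\sqcup\{*\},a,X))$ together with the outer induction hypothesis. Your first option $Y=\T^*(Q\sqcup\{*\},a,X)$ is exactly the one that works, and your two worries are moot: no prefixing is needed because those trees have root $*\nleq q$, and $\L{q}$ is downward closed in $Q$, hence an initial segment.
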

\begin{proof}
    We prove the claim by induction on $\o(Q)$ and $\w(X)$. It is sufficient to check three properties from \cref{f def}.
    The first property is implied by \cref{w(T) is epsilon}, and the second follows from induction and \cref{w(T) is monotone by X}. For the third property, take any $b \in Q$ and note that $\T^*(Q\sqcup\{*\}, a, X) \supset \T^*(\L{b}\sqcup\{*\}, b, \T^*(Q\sqcup\{*\}, a, X))$, hence by induction $\w(\T^*(Q\sqcup\{*\}, a, X)) \geq \w(\T^*(\L{b}\sqcup\{*\}, b, \T^*(Q\sqcup\{*\}, a, X))) \geq f_{\l{b}}(\w(\T^*(Q\sqcup\{*\}, a, X)))$, so we are done.
\end{proof}

\begin{proposition}\label{w T(Q cup *)}
    For a WQO $Q$ of width at least 2, $\w(\T(Q \sqcup \{*\})) \geq \phi_{\o(Q)}(0)$.
\end{proposition}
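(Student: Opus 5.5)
The plan is to apply \cref{w via o} inside $\T(Q \sqcup \{*\})$ and then pass from $f_{\o(Q)}$ at a small argument to $\phi_{\o(Q)}(0)$. Since \cref{w via o} needs an auxiliary label $a$ lying outside $Q$ and not below any element of $Q$, I will first carve such a label out of $Q$ itself.

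Since $\w(Q) \geq 2$, pick $r \in Q$ with $\k{r} > 0$, i.e.\ some $s \in Q$ with $s \perp r$. I would use the element $s$ (or, more carefully, a suitable element $b$) as the label $a$, and a suitable initial segment of $Q$ as the base. Concretely, fix $b \in Q$ and set $Q_0 = \L[Q]{b}$. This is an initial segment of $Q$, the label $b$ is not $\leq$ any element of $Q_0$, and $\l{b}$ can be made to range cofinally in $\o(Q)$ by the residual equation. The label $*$ is incomparable with everything, so $\T^*(Q_0 \sqcup \{*\}, b, X) \subset \T(Q \sqcup \{*\})$ for any admissible $X$. By \cref{embedding},
\[
\w(\T(Q \sqcup \{*\})) \geq \w(\T^*(Q_0 \sqcup \{*\}, b, X)) \geq f_{\l{b}}(\w(X)),
\]
provided $\w(X) \geq 4$ and all roots of trees in $X$ are not $\leq b$.

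The next step is to build a starting set $X$ of width at least $4$ whose roots are not below $b$. I would take $X$ to consist of trees with root $*$ (note $* \nleq b$). For instance, use trees $*t$ for $t$ ranging over an antichain of trees built from an incomparable pair $r \perp s$, such as $r s$, $s r$, $r(s\sqcup\ldots)$ and similar. Alternatively, take sets in $\S[2]$ of $\{r,s,\dots\}$-trees, which already gives width $\geq 4$ by \cref{w(S2)} once some width $\geq 3$ is available. Iterating the maps $F \mapsto rF$ via \cref{restrict the root} and \cref{w(T) is epsilon} should then push the width up to at least $4$, and in fact to $\geq \omega$.

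The final step is the ordinal bookkeeping. We get $\w(\T(Q\sqcup\{*\})) \geq f_{\l{b}}(4) \geq \phi_{\l{b}}(0)$ for every $b$. If $\o(Q)$ is a successor, choose $b$ with $\l{b}+1 = \o(Q)$ and use the fixed-point iteration hidden in the third clause of \cref{f def}: the width is a fixed point of all $f_{\alpha'}$ with $\alpha' < \o(Q)$, and is epsilon or $\omega$. For limit $\o(Q)$, take the supremum over $b$ of the $\phi_{\l{b}}(0)$, which gives $\phi_{\o(Q)}(0)$ directly when the sup is a fixed point of all lower $\phi$. The main obstacle is the successor case, together with the off-by-one between $f$ and $\phi$ for small indices ($f_1(4)=\omega$ versus $\phi_1(0)=\epsilon_0$). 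There I would apply \cref{w via o} with the whole $Q$ as base and a fresh label $a$ realized inside $Q \sqcup \{*\}$ by nesting, getting $f_{\o(Q)}(\w(X)) \geq \phi_{\o(Q)}(0)$ once $\w(X) \geq 5$.
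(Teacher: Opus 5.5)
Your limit case is sound. With a fixed set $X$ of $*$-rooted trees of width at least $4$, \cref{w via o} gives $\w(\T(Q\sqcup\{*\}))\geq f_{\l{b}}(4)=\phi_{\l{b}}(0)$ whenever $\l{b}\geq 2$ (not for $\l{b}=1$, where $f_1(4)=\omega$). The supremum over $b$ is then $\phi_{\o(Q)}(0)$.

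\textbf{The gap is the successor case.} This case includes every finite $\o(Q)$, e.g.\ $Q$ a two-element antichain. With $X$ fixed in advance, the most you get is $f_{\o(Q)-1}(\w(X))$, which is far below $\phi_{\o(Q)}(0)$; for $\o(Q)=2$ this is just $f_1(4)=\omega$ against the target $\phi_2(0)$.

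Your appeal to the third clause of \cref{f def} does not close this. That clause says $f_{\l{b}}(\beta)$ is a fixed point of $f_{\alpha'}$ only for $\alpha'<\l{b}=\o(Q)-1$. It says nothing about $f_{\o(Q)-1}$ itself, which is exactly the fixed point you need.

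The fallback of applying \cref{w via o} with all of $Q$ as base and a ``fresh label realized by nesting'' is not available either. That lemma needs two labels outside the base: $a$, not below any base element, and $*$, incomparable with $a$ and the base. Labels must be elements of $Q\sqcup\{*\}$, not trees, so with base $Q$ only $*$ remains.

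\textbf{The missing idea is to make $X$ self-referential.} Take $X=\T^*(Q\sqcup\{*\})$, the set of all $*$-rooted trees. Its width is at least $4$, witnessed by $*x*x,*x*y,*y*x,*y*y$ for $x\perp y$. Since $\T^*(\L{b}\sqcup\{*\},b,X)\subseteq X$, \cref{w via o} yields $W:=\w(X)\geq f_{\l{b}}(W)\geq f_\alpha(W)\geq W$ for every $1\leq\alpha\leq\l{b}$. So $W$ is a fixed point of all these $f_\alpha$, and since $W$ is infinite, of the corresponding $\phi_\alpha$.

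Now choose $b$ with $\l{b}=\o(Q)-1$ in the successor case, or with $\l{b}$ cofinal in the limit case. Then $W$ is a common fixed point of all $\phi_\alpha$ with $\alpha<\o(Q)$, hence $W\geq\phi_{\o(Q)}(0)$. This treats both cases uniformly, and it is exactly the paper's proof.

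Equivalently, you could iterate $X_{n+1}=\T^*(\L{b}\sqcup\{*\},b,X_n)$ starting from your small $X_0$. This gives $\w(\T(Q\sqcup\{*\}))\geq\sup_n f^{(n)}_{\l{b}}(4)\geq\phi_{\l{b}+1}(0)$, which is the same fixed-point argument unrolled.
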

\begin{proof}
    Let $\T^*(Q\sqcup\{*\})$ be the subset of those trees whose root is labeled with $*$. If $x,y \in Q$ are two incomparable elements, then trees $*x*x,*x*y,*y*x,*y*y$ are pairwise incomparable, hence $\w(\T^*(Q\sqcup\{*\})) \geq 4$. Therefore, it is sufficient to prove that $\w(\T^*(Q\sqcup\{*\}))$ is a fixed point of $f_\alpha$ for all $1 \leq \alpha < \o(Q)$.
    Indeed, choose $a \in Q$ such that $\l{a} \geq \alpha$. Then we have $\T^*(Q\sqcup\{*\}) \supset \T^*(\L{a}\sqcup\{*\}, a, \T^*(Q\sqcup\{*\}))$, hence, by \cref{embedding,w via o}, $\w(\T^*(Q\sqcup\{*\})) \geq \w(\T^*(\L{a}\sqcup\{*\}, a, \T^*(Q\sqcup\{*\}))) \geq f_{\l{a}}(\w(\T^*(Q\sqcup\{*\}))) \geq f_\alpha(\w(\T^*(Q\sqcup\{*\})))$, and we are done.
\end{proof}

If we combine this proposition with the fact that $\o(Q) \geq \w(Q)$ and apply the residual equation for $\w(Q)$, we may get $\o(\T(Q)) \geq \w(\T(Q)) \geq \phi_{\w(Q)-1}(0)$. This bound meets the upper bound only in the case $\w(Q) = \o(Q) = \lambda$, where $\lambda$ is a limit ordinal. In the rest of the section, we improve the bound to get $\w(\T(Q)) \geq \phi_{\w(Q)}(0)$, which meets the upper bound in the case $\w(Q) = \o(Q)$ (not necessarily a limit ordinal).

First, we prove an analog of the \cref{w via o} for $\w(Q)$.

\begin{lemma}
    Let $Q$ be a WQO whose width is a limit ordinal, and let $a$ be not less than or equal to any element of $Q$.
    Suppose that for all trees in $X$, the root label is not less than or equal to $a$.
    If $\w(X) \geq 4$, then $\w(\T^Q(Q, a, X)) \geq f_{\w(Q)}(\w(X))$.
\end{lemma}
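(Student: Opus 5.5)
We argue by induction on $\w(Q)$ and $\w(X)$, following the proof of \cref{w via o}. By \cref{f def}, it suffices to show that $W := \w(\T^Q(Q, a, X))$ has three properties: it is an epsilon number or $\omega$, it exceeds $f_{\w(Q)}(\beta')$ for $4 \leq \beta' < \w(X)$, and it is a fixed point of $f_{\alpha'}$ for every $1 \leq \alpha' < \w(Q)$. The new difficulty, compared with \cref{w via o}, is that there is no auxiliary label $*$ incomparable to everything. In its place we use the residual equation for the width: because $\w(Q)$ is a limit ordinal, for every $\alpha' < \w(Q)$ there is some $r \in Q$ with $\k{r} \geq \alpha'$, and then $\k{r} > 0$.

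The first two properties go as before. Fix $r \in Q$ with $\k{r} > 0$. The first claim of \cref{w(T) is monotone by X} gives $\w(\T^{\nleq r}(Q,a,X)) \geq \w(X) \geq 4$. Then \cref{w(T) is epsilon} shows that $\w(\T^{r}(Q,a,X)) = \w(\T^{\nleq r}(Q,a,X))$ and that this common value is an epsilon number or $\omega$. By \cref{restrict the root}, $W$ is at least this value.

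To make $W$ itself an epsilon number, I would show $W$ equals $\sup_r \w(\T^r(Q,a,X))$, up to the obvious closure, by writing $\T^Q$ as a finite union of the sets $\T^{\nleq r}$ and $\T^{r}$. If that fails, I would aim to prove directly that $2^W \leq W$ via the reflection $F \mapsto rF$ from $\S[2](\T^Q(Q,a,X))$. This works whenever $r$ is minimal, since then every root label is $\not\leq r$ or equivalent to $r$.

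For the second property, \cref{w(T) is monotone by X} gives $W \geq \w(\T^r(Q,a,X)) > \w(\T^r(Q,a,\K{x}))$ for $x \in X$. The ordinals $\w(\K{x})$ range over all values below $\w(X)$, so it remains to bound $\w(\T^r(Q,a,\K{x}))$ from below by $f_{\w(Q)}(\w(\K{x}))$. The induction hypothesis bounds $\w(\T^Q(Q,a,\K{x}))$ instead. To pass from $\T^Q$ to $\T^r$, I would compose with $t \mapsto st$ for suitable $s$, or choose $r$ minimal so that $F \mapsto rF$ reflects $\S[2](\T^Q(\dots))$ into $\T^r(\dots)$.

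For the fixed-point property, take $\alpha' < \w(Q)$ and $b \in Q$ with $\k{b} \geq \alpha'$. The inclusion to use is
\[ \T^Q(Q,a,X) \supset \T^{Q'}(Q', b, \T^{\nleq b}(Q,a,X)), \qquad Q' = \K{b}. \]
Here $Q'$ consists of elements incomparable to $b$, so $b$ is not below any element of $Q'$, and the trees of $\T^{\nleq b}(Q,a,X)$ have roots $\not\leq b$, as the lemma's hypothesis requires. Two checks are needed. First, the new label $b$ lies outside the initial segment $Q'$, but $Q'$ must be an initial segment of the ambient WQO; I would replace $Q'$ by its downward closure, or choose $b$ so that $\K{b}$ is downward closed. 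Second, condition 2 of the definition must be satisfied. Once the inclusion is in place, the induction hypothesis applied to $Q'$ gives $W \geq f_{\w(Q')}(\w(\T^{\nleq b}(Q,a,X))) = f_{\w(Q')}(W')$, where $W' = \w(\T^{\nleq b}(Q,a,X))$.

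This step is the main obstacle, for two reasons. The induction hypothesis needs $\w(Q')$ to be a limit ordinal. When it is not, I would fall back on \cref{w via o}, using an element of $Q'$ to play the role of $*$ and the bound $\o \geq \w$, which gives $f_{\w(Q')}$ or better. We also need $W'$ to equal $W$, which should follow from \cref{w(T) is epsilon} together with the comparison between $\T^Q$ and $\T^{\nleq b}$ discussed above. With both in hand, monotonicity of $f$ in its index gives $W \geq f_{\alpha'}(W)$, so $W$ is a fixed point of $f_{\alpha'}$.
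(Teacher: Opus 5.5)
The gap is the step you yourself call the main obstacle. For the fixed-point property you need $W'=\w(\T^{\nleq b}(Q,a,X))\geq W$, and nothing in your proposal provides it. Note that \cref{w(T) is epsilon} only relates $\T^{b}$ and $\T^{\nleq b}$ to each other, never to all of $\T^Q$.

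For an arbitrary $b$ with $\k{b}\geq\alpha'$ the inequality can fail. Take $Q=Q_1+Q_2$ with $\w(Q_1)=\w(Q)$ and $b\in Q_2$. In a minimal tree no child's label is $\leq$ its parent's, so no label from $Q_1$ occurs above the $a$-vertices of a tree in $\T^{\nleq b}(Q,a,X)$. Hence $W'$ is governed by $Q_2$ alone and can lie far below $W\geq\w(\T^{Q_1}(Q_1,a,X))$.

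What is missing is a normalization of $Q$ together with a way to move width between root classes. The paper does this as follows.
\begin{itemize}
\item It assumes without loss of generality that $Q$ minimizes $W$ among WQOs of width $\w(Q)$. Then every full-width $Q'\subseteq Q$ gives the same $W$.
\item It then also minimizes $\h(Q)$. This excludes splittings $Q=P+S$ with $S\neq\emptyset$ and $\w(P)=\w(Q)$, which is exactly the situation of the example above.
\item For $r\perp *$ with $\k{r,*}\geq\alpha$, it takes $S$ to be the set of elements reachable from $r$ by chains with $r_i\nleq r_{i-1}$. Then $Q=(Q\setminus S)+S$, so $\w(S)=\w(Q)$.
\item The finite-basis property gives a full-width $S'=\bigcup_{i\leq m}L_{\nleq,S}(s_i)$. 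Additive indecomposability of $W$ then yields an $i$ with $\w(\T^{\nleq s_i}(S',a,X))=W$.
\item Along a chain $r=r_0,\dots,r_n=s_i$, \cref{w(T) is epsilon} gives $\w(\T^{\nleq r_j})=\w(\T^{r_j})\leq\w(\T^{\nleq r_{j-1}})$. Hence $\w(\T^{\nleq r}(S',a,X))\geq W$.
\end{itemize}
Only after this does the paper apply \cref{w via o}, with base $\K{r,*}$, new label $r$ and isolated label $*$. That step is essentially your fallback, and it gives $W\geq f_\alpha(W)$.

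The first two properties are also not established for $W$ itself. You obtain only $W\geq\w(\T^{\nleq r}(Q,a,X))$. For the second property you must also pass from the induction hypothesis on $\T^Q(Q,a,\K{x})$ to $\T^{r}(Q,a,\K{x})$.

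The tool you propose for both steps does not work. The map $F\mapsto rF$ is not a reflection on $\S[2](\T^Q(Q,a,X))$, even for minimal $r$. Take incomparable $H_1,H_2\in\T^{\nleq r}(Q,a,X)$ whose roots are incomparable to $r$. Then $r\{rH_1,rH_2\}\simeq r\{H_1,H_2\}$, whereas $\{rH_1,rH_2\}\not\leq\{H_1,H_2\}$ in general. The map reflects only on $\S[2](\T^{\nleq r}(Q,a,X))$. Likewise $t\mapsto st$ reflects only trees whose root label is $\nleq s$.

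The paper handles both properties with the same normalization. Take $r$ minimal with $\k{r}>0$. Then $L_\leq(r)$ is a chain, so $\w(L_\nleq(r))=\w(Q)$ because $\w(Q)$ is a limit. Therefore $W=\w(\T^{L_\nleq(r)}(L_\nleq(r),a,X))=\w(\T^{\nleq r}(Q,a,X))$. The first two properties then follow from \cref{w(T) is epsilon,w(T) is monotone by X}, using the induction hypothesis on $\w(X)$ for the base $L_\nleq(r)$.
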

\begin{proof}
    By induction on $\w(X)$. Without loss of generality, we may assume that $\w(\T^Q(Q, a, X))$ is the least possible among all pairs of $Q$ and $X$ of the widths $\w(Q)$ and $\w(X)$, respectively. Therefore, for any subset $Q' \subset Q$ of the width $\w(Q)$, we have $\w(\T^{Q'}(Q', a, X)) = \w(\T^Q(Q, a, X))$. We may also assume that $\h(Q)$ is the least possible among all $Q$ for which previous assumptions hold, so $Q$ does not contain a subset of the smaller height but the same width.

    We will prove the claim using \cref{f def}. First, let $r \in Q$ be minimal such that $\k{r} > 0$. Since $\w(L_\leq(r)) = 1$ and $\w(Q)$ is a limit ordinal, by \cref{h o w(cup)}, $\w(L_\nleq(r)) = \w(Q)$. Therefore, $\w(\T^Q(Q, a, X)) = \w(\T^{L_\nleq(r)}(L_\nleq(r), a, X)) = \w(\T^{\nleq(r)}(Q, a, X))$, and by \cref{w(T) is epsilon,w(T) is monotone by X}, the first two conditions from \cref{f def} are satisfied.

    Now, we prove that $\w(\T^Q(Q, a, X))$ is closed under $f_\alpha$ for any $\alpha < \w(Q)$. Let $r, * \in Q$ be such that $r \perp *$ and $\k{r, *} \geq \alpha$. Consider the set $S$ of those $s \in Q$ for which there exists a sequence $r_0 = r, r_1, \dots, r_{n-1},r_n = s$ such that $r_i \nleq r_{i-1}$ for all $1 \leq i \leq n$. By construction, for any $s \in Q \setminus S$, we have $\forall s' \in S (s \leq s')$, hence $Q = (Q \setminus S) + S$. By assumption, $\w(Q \setminus S) < \w(Q)$, so by \cref{h o w(cup)}, $\w(S) = \w(Q)$. $S$ is a union of upwards-closed sets $L_{\not<,S}(s)$, hence it is a union of finite amount of them, $S = S' \cup \{s_1, \dots, s_m\}$ where $S' = L_{\nleq,S}(s_1) \cup \dots \cup L_{\nleq,S}(s_m)$. By \cref{h o w(cup)}, $\w(S') = \w(S)$ because $\w(S)$ is a limit ordinal.

    $\T^{S'}(S', a, X) = \T^{\nleq s_1}(S', a, X) \cup \dots \cup \T^{\nleq s_m}(S', a, X)$. Since $\w(\T^{S'}(S', a, X)) = \w(\T^Q(Q, a, X))$ is an epsilon number or $\omega$, it is additively indecomposable. Therefore, for some $i$ we have $\w(\T^{\nleq s_i}(S', a, X)) = \w(\T^Q(Q, a, X))$. By construction of $S$, there exists a sequence $r_0 = r, r_1, \dots, r_{n-1}, r_n = s_i$ such that $r_i \nleq r_{i-1}$. By \cref{w(T) is epsilon}, $\w(\T^Q(Q, a, X)) = \w(\T^{\nleq s_i}(S', a, X)) = \w(\T^{r_n}(S', a, X)) \leq \w(\T^{\nleq r_{n-1}}(S', a, X)) = \w(\T^{r_{n-1}}(S', a, X)) \leq \dots \leq \w(\T^{\nleq r_0}(S', a, X))$. Observe that $\T^Q(Q, a, X) \supset \T^*(\K{r, *} \sqcup \{*\}, r, \T^{\nleq r}(S', a, X))$, hence by \cref{w via o}, we have $\w(\T^Q(Q, a, X)) \geq f_{\o(\K{r, *})}(\w(\T^{\nleq r}(S', a, X))) \geq f_\alpha(\w(\T^Q(Q, a, X)))$, which completes the proof.
\end{proof}

The next lemma extends the result on non-limit $\w(Q)$.

\begin{lemma}\label{w via w}
    Let $Q$ be a WQO of the infinite width and let $a$ be not less than or equal to any element of $Q$.
    Suppose that for all trees in $X$, the root label is not less than or equal to $a$.
    If $\w(X) \geq 4$, then $\w(\T^Q(Q, a, X)) \geq f_{\w(Q)}(\w(X))$.
\end{lemma}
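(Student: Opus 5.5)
We extend the previous lemma, which covers limit $\w(Q)$, to successor widths. If $\w(Q)$ is a limit ordinal, the claim is exactly the previous lemma. So assume $\w(Q) = \lambda + n$ with $\lambda$ a limit ordinal (possibly $\lambda = \omega\cdot k$ or larger) and $1 \leq n < \omega$. We argue by induction on $n$, and inside that by induction on $\w(X)$, verifying the three conditions of \cref{f def} for $\w(\T^Q(Q, a, X))$. As in the previous lemma, we first assume $\w(\T^Q(Q, a, X))$ is minimal among all $Q$ of width $\w(Q)$ and all $X$ of width $\w(X)$. This lets us pass freely to subsets of $Q$ of the same width.

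\emph{Conditions 1 and 2.} Pick $r \in Q$ minimal with $\k{r} > 0$. Then $\w(L_\leq(r)) = 1$, and $\w(Q) = \w(L_\leq(r) \sqcup L_\nleq(r))$ up to the reflection $Q \embeds L_\leq(r) \sqcup L_\nleq(r)$. By \cref{h o w(cup)}, $\w(L_\nleq(r))$ is either $\w(Q)$ or $-1+\w(Q)$ shifted, and it is at least $\lambda + n - 1$. If it equals $\w(Q)$, we proceed exactly as in the limit case: the first two conditions follow from \cref{w(T) is epsilon,w(T) is monotone by X} applied to $\T^{\nleq r}(Q, a, X)$. Otherwise $\w(L_\nleq(r)) = \lambda + (n-1)$, and the induction hypothesis on $n$ gives
\[
\w(\T^{\nleq r}(Q,a,X)) \geq f_{\lambda+n-1}(\w(X)) .
\]
In either case \cref{w(T) is epsilon} gives the epsilon/$\omega$ property, and \cref{w(T) is monotone by X} gives strict growth in $X$. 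The latter ensures the second condition by the inner induction on $\w(X)$.

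\emph{Condition 3.} We must show that $\w(\T^Q(Q, a, X))$ is a fixed point of $f_\alpha$ for every $\alpha < \w(Q)$. For $\alpha < \lambda + n - 1$, we reduce to a subset of $Q$ of width $\lambda + n - 1$ (or $\lambda$), for instance $L_\nleq(r)$, and use the induction hypothesis. This works because a fixed point of $f_{\lambda+n-1}$ is a fixed point of all smaller $f_\alpha$. The essential new case is $\alpha = -1 + \w(Q)$, i.e.\ $\alpha = \lambda + n - 1$. Here we follow the limit proof. By the residual equation for width, choose $r \perp *$ in $Q$ with $\k{r,*} = \lambda+n-1$; we may take $* \perp r$ since $\K{r}$ is nonempty of that width. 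We then build the set $S$ of elements reachable from $r$ by $\nleq$-steps, decompose $Q = (Q\setminus S) + S$ and $S \supseteq S'$ as before, and use additive indecomposability and the chain of inequalities from \cref{w(T) is epsilon} to get $\w(\T^{\nleq r}(S', a, X)) \geq \w(\T^Q(Q, a, X))$. Next we use the inclusion $\T^Q(Q,a,X) \supset \T^*(\K{r,*}\sqcup\{*\}, r, \T^{\nleq r}(S',a,X))$. The key point is to apply the induction hypothesis in its \emph{width} form (on $\K{r,*}$, of width $\lambda+n-1$) rather than \cref{w via o}. Since $\T^*(P\sqcup\{*\},r,Y)$ contains $*\,\T^P(P,r,Y)$ via a reflection, we get $\w(\T^*(P\sqcup\{*\},r,Y)) \geq \w(\T^P(P,r,Y))$. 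This yields $\w(\T^Q(Q,a,X)) \geq f_{\lambda+n-1}(\w(\T^Q(Q,a,X)))$.

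\emph{Main obstacle.} The delicate steps are in the successor case. First, the decomposition $Q = (Q\setminus S)+S$ needs $\w(S) = \w(Q)$: with $\w(Q)$ a successor, $\w(Q\setminus S) < \w(Q)$ alone does not force this. I would try to choose $r,*$ inside a minimal-height subset realizing the width, so that $Q \setminus S$ is small. Second, the passage from $S$ to $S'$ loses the finitely many $s_i$, which can drop a successor width by a finite amount. I expect to handle this by starting from $\lambda + n + m$ slack, or by arguing directly with the $s_i$ via \cref{restrict the root}, which requires the widths to be at least $3$.
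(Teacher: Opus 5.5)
Your proposal has a genuine gap at the step you call essential. You ask for $r \perp *$ with $\k{r,*} = \lambda+n-1 = \w(Q)-1$, but no such pair exists. Since $* \in \K{r}$, the set $\K{r,*}$ is the residual of $*$ inside $\K{r}$, so the residual equation gives $\k{r,*} < \k{r} \leq \w(Q)-1$. The width form of the induction hypothesis applied to $\K{r,*}$ therefore makes $\w(\T^Q(Q,a,X))$ closed under $f_\beta$ only for $\beta < \w(Q)-1$. That is one level short of the third condition of \cref{f def} for $f_{\w(Q)}$; for $n=1$ you only get a common fixed point of all $f_\beta$, $\beta<\lambda$, which need not be a fixed point of $f_\lambda$. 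Switching to \cref{w via o} does not help in general: when $\o(Q)=\w(Q)$ one also has $\o(\K{r,*}) < \o(\K{r}) \leq \w(Q)-1$.

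Independently, the transfer through $S$ and $S'$ borrowed from the limit case does break, as you suspect. Removing $s_1,\dots,s_m$ can lower a successor width, and without $\w(S')=\w(Q)$ your minimality assumption no longer identifies $\w(\T^{S'}(S',a,X))$, and hence $\w(\T^{\nleq r}(S',a,X))$, with $\w(\T^Q(Q,a,X))$. The same problem affects your conditions 1 and 2 in the case $\w(L_{\nleq}(r)) = \lambda+n-1$. There you establish the epsilon property and strict monotonicity for $\w(\T^{\nleq r}(Q,a,X))$, not for the quantity you later try to show is a fixed point. Your first worry, by contrast, is harmless: since $\w(P+R)=\max(\w(P),\w(R))$, the inequality $\w(Q\setminus S)<\w(Q)$ does force $\w(S)=\w(Q)$.

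The missing idea is that in the successor case the $*$, $S$, $S'$ and minimality machinery is unnecessary. Choose $r$ with $\k{r}=\w(Q)-1$, which is possible because $\w(Q)$ is a successor. Then prove by induction on $\w(Y)\geq 4$ that $\w(\T^{\nleq r}(Q,a,Y)) \geq f_{\w(Q)}(\w(Y))$; the statement for $\T^Q(Q,a,X)$ follows from \cref{restrict the root}. Conditions 1 and 2 of \cref{f def} come from \cref{w(T) is epsilon,w(T) is monotone by X} for this fixed $r$. For condition 3, use the inclusion $\T^{\nleq r}(Q,a,Y) \supset \T^{\K{r}}(\K{r}, r, \T^{\nleq r}(Q,a,Y))$. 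Roots in $\K{r}$ are automatically $\nleq r$, $r$ lies below no element of $\K{r}$, and the inner trees have roots $\nleq r$. So the outer induction hypothesis applies to $\K{r}$, of infinite width $\w(Q)-1$, and gives $\w(\T^{\nleq r}(Q,a,Y)) \geq f_{\w(Q)-1}(\w(\T^{\nleq r}(Q,a,Y)))$. Working with $\T^{\nleq r}$ instead of $\T^Q$ makes the substitution self-referential, so no chain of inequalities through $S'$ is needed. Rooting the outer trees in $\K{r}$ rather than at an extra label $*$ avoids losing the one level of width; the label $*$ was forced in the limit case only by the form of \cref{w via o}.
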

\begin{proof}
    We prove the claim by induction on $\w(Q)$. The case of a limit $\w(Q)$ is verified by the previous lemma, so we may assume that $\w(Q)$ is a successor ordinal.
    
    Choose $r \in Q$ such that $\k{r} = \w(Q)-1$. The statement will follow from \cref{restrict the root} and the fact that $\w(\T^{\nleq r}(Q, a, Y)) \geq f_{\w(Q)}(\w(Y))$ for $\w(Y) \geq 4$, which we prove by induction on $\w(Y)$. Again, we check three properties from \cref{f def}. The first two properties follow from \cref{w(T) is epsilon,w(T) is monotone by X}. The third property follows by induction from the fact that $\T^{\nleq r}(Q, a, Y) \supset \T^{\K{r}}(\K{r}, r, \T^{\nleq r}(Q, a, Y))$.
\end{proof}

Finally, we apply this lemma to get the desired bound.

\begin{theorem}
    For a WQO $Q$ of the width at least 3, $\w(\T(Q)) \geq \phi_{-1+\w(Q)}(0)$.
\end{theorem}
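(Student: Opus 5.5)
We split the argument into the finite and the infinite case of $\w(Q)$.

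\emph{Finite case.} Suppose $3 \leq \w(Q) = n < \omega$. Then $Q$ contains an antichain $\overline{n}$. Since $\overline{n}$ is a subset of $Q$, every $\overline{n}$-labeled tree is a $Q$-labeled tree. Moreover, the $h$-preorder on $\T(\overline{n})$ coincides with the restriction of the $h$-preorder on $\T(Q)$, because labels are only compared inside $\overline{n}$. So the inclusion $\T(\overline{n}) \embeds \T(Q)$ is a reflection. By \cref{embedding} and the result of \cite{Selivanov2025} that $\w(\T(\overline{n})) = \phi_{n-1}(0)$, we get $\w(\T(Q)) \geq \phi_{n-1}(0) = \phi_{-1+\w(Q)}(0)$.

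\emph{Infinite case.} Now let $\w(Q)$ be infinite, so $-1+\w(Q) = \w(Q)$, and we must show $\w(\T(Q)) \geq \phi_{\w(Q)}(0)$. We cannot apply \cref{w via w} directly, since it needs a fresh label $a$ outside $Q$. Instead we mimic the proof of \cref{w T(Q cup *)}, working inside $Q$ itself.

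Fix the set $Y = \T^Q(Q) = \T(Q)$. Its width is at least $4$: an infinite-width $Q$ contains a $4$-element antichain, and one-vertex trees with those labels are pairwise incomparable. We show that $\w(Y)$ is a fixed point of $f_\alpha$ for every $1 \leq \alpha < \w(Q)$.

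To do this, given $\alpha$, choose $a \in Q$ with $\k{a} \geq \alpha$. This is possible by the residual equation for $\w$. If $\w(Q)$ is a successor, we can even take $\k{a} = \w(Q)-1$, which handles $\alpha = \w(Q)-1$.

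Set $Q_a = \K{a}$. We would like $\T^{Q_a}(Q_a, a, Z)$, with $Z$ a suitable subfamily of $Y$, to sit inside $\T(Q)$ with the induced order. Each such tree is literally a $Q$-labeled tree, so the inclusion is a reflection. The hypotheses of \cref{w via w} need:
\begin{itemize}
\item $a$ not $\leq$ any element of $Q_a$, which holds since $Q_a$ consists of elements incomparable with $a$;
\item $Q_a$ an initial segment of the ambient order, which is needed in \cref{w rec tree}.
\end{itemize}
The second condition may fail, so we apply the definition with ambient order $Q$ and check that the proof of \cref{w via w} only used that the labels below $a$ are handled correctly. We also need the roots of trees in $Z$ to be $\nleq a$, so we take $Z = \T^{\nleq a}(Q)$. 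Its width equals $\w(Y)$: by the analogues of \cref{restrict the root,w(T) is epsilon}, adjoining a root $r$ below an element incomparable with $a$ costs nothing, since these widths are epsilon numbers. Then \cref{w via w} gives
\[
\w(Y) \geq \w\bigl(\T^{Q_a}(Q_a, a, Z)\bigr) \geq f_{\k{a}}(\w(Z)) \geq f_\alpha(\w(Y)).
\]

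Finally, \cref{f def} with $\beta = 4$, together with the facts that $\w(Y)$ is epsilon or $\omega$ and is closed under all $f_\alpha$ with $\alpha < \w(Q)$, gives $\w(Y) \geq f_{\w(Q)}(4) = \phi_{\w(Q)}(0)$.

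\emph{Main obstacle.} The hardest step is the embedding in the infinite case, namely getting a fresh label playing the role of $a$ without leaving $Q$. The point to check is that the trees in $\T^{Q_a}(Q_a, a, Z)$ compare in $\T(Q)$ exactly as in the construction, so that the lemmas transfer even though $Q_a$ need not be an initial segment of $Q$. If this fails, I would fall back on the following: pick a maximal-width antichain-rich part of $Q$ and verify the conditions of \cref{w rec tree} for it directly.
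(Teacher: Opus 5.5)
Your finite case is correct and takes a slightly different route from the paper. You embed an $n$-element antichain and quote $\w(\T(\overline{n}))=\phi_{n-1}(0)$ from \cite{Selivanov2025}, while the paper applies \cref{w T(Q cup *)} to $\K{r}\sqcup\{r\}$ with $\k{r}=\w(Q)-1$.

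The infinite case has a genuine gap at the step $\w(Z)=\w(Y)$, i.e.\ $\w(\T^{\nleq a}(Q))=\w(\T(Q))$. You need this to pass from $f_{\k{a}}(\w(Z))$ to $f_\alpha(\w(Y))$, and the lemmas you invoke do not give it. \cref{restrict the root} bounds a part by the whole, not conversely. \cref{w(T) is epsilon} compares $\T^r$ and $\T^{\nleq r}$ inside the special families $\T(Q,a,X)$ via $t\mapsto st$, which is a reflection only because the root $r$ of $t$ is not below $s$. Trees of $\T(Q)$ whose root lies below $a$ cannot be moved into $\T^{\nleq a}(Q)$ this way. Concretely, let $Q=\{m\}+Q_0$ with $m$ least and $a\in Q_0$. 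Then $F\mapsto mF$ gives $\T(Q)\cong\S(\T(Q_0))\cong\F(Q_0)$, whereas $\T^{\nleq a}(Q)\subseteq\T(Q_0)$. So your claim would amount to $\w(\F(Q_0))\leq\w(\T(Q_0))$, an upper bound on the width of a finitary powerset that nothing in the paper provides.

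The paper avoids any such comparison by using one set as both container and argument. Take $\k{r}=\w(Q)-1$ and $Y=\T^{\bot r}(Q)$. Every tree of $\T^{\K{r}}(\K{r},r,Y)$ has its root in $\K{r}$, so $Y\supset\T^{\K{r}}(\K{r},r,Y)$. Then \cref{w via w} gives $\w(Y)\geq f_{\k{r}}(\w(Y))$ outright, and hence $\w(\T(Q))\geq\w(Y)\geq\phi_{\w(Q)}(0)$. You could equally take your $Z$ itself as the container.

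But $Y$ depends on the chosen label, so this handles one $\alpha$ at a time. That is why the paper treats successor widths with a single $r$, and reduces limit widths to subsets of successor width using $\sup_{\alpha<\w(Q)}\phi_{\alpha+1}(0)=\phi_{\w(Q)}(0)$.

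Your uniform argument also breaks when $\w(Q)=\omega$. \cref{w via w} requires its first argument to have infinite width, but then every $\K{a}$ has finite width. That case has to come from your finite case plus a supremum.

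Finally, the initial-segment issue you singled out as the main obstacle is not the real difficulty. The paper itself instantiates \cref{w rec tree} with $\K{r}$, which is generally not an initial segment. The lemmas only use that $a$ lies below no label of the first argument and that the roots of trees in $X$ are not below $a$.
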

\begin{proof}
    First, we assume that $\w(Q)$ is a successor ordinal. Let $r \in Q$ be such that $\k{r} = \w(Q)-1$. If $\w(Q)$ is finite, then by \cref{w T(Q cup *)}, $\w(\T(Q)) \geq \w(\T(\K{r}\sqcup\{r\})) \geq \phi_{\o(\K{r})}(0) \geq \phi_{\w(Q)-1}(0)$ and we are done. 
    
    So, we assume that $\w(Q)$ is infinite but not limit. Let $\T^{\bot r}(Q)$ be the subset of those trees whose root is incomparable with $r$, and let $s \in \K{r}$ be such that $\k{s} \geq 2$. By \cref{w T(Q cup *)}, $\w(\T^{\bot r}(Q)) \geq \w(\T(\K{r,s}\sqcup\{s\})) \geq \phi_{\o(\K{r,s})}(0) \geq 4$. Observe that $\T^{\bot r}(Q) \supset \T^{\K{r}}(\K{r}, r, \T^{\bot r}(Q))$. By \cref{embedding,w via w}, $\w(\T^{\K{r}}(Q)) \geq \w(\T^{\K{r}}(\K{r}, r, \T^{\K{r}}(Q))) \geq f_{\k{r}}(\w(\T^{\bot r}(Q)))$. Hence $\w(\T^{\bot r}(Q))$ is a fixed point of $f_{\w(Q)-1}$, so $\w(\T(Q)) \geq \w(\T^{\bot r}(Q)) \geq \phi_{\w(Q)}(0)$.

    Now, let $\w(Q)$ be a limit ordinal. For any $\alpha < \w(Q)$, there exists $Q' \subset Q$ of the width $\alpha+1$, hence $\w(\T(Q)) \geq \w(\T(Q')) \geq \phi_{\alpha+1}(0)$, so $\w(\T(Q)) \geq \sup_{\alpha<\w(Q)} \phi_{\alpha+1}(0) = \phi_{\w(Q)}(0)$.
\end{proof}

\begin{corollary}
    For a WQO $Q$ of the width at least 3, $\w(\F(Q)) \geq \phi_{-1+\w(Q)}(0)$.
\end{corollary}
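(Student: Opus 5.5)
The corollary should follow from the preceding theorem through a reflection $\T(Q) \embeds \F(Q)$. I would take the identification $\F(Q) \cong \S(\T(Q))$ stated after the recurrence lemma in \cref{trees}, and map each tree $t$ to the one-tree forest $\{t\}$. This map is a reflection. If $\{t\} \leq \{t'\}$ in the domination preorder, then by the first two points of that lemma $t \leq t'$. Equivalently, the one-tree forest $t$ is just the tree $t$ regarded as a forest, and the $h$-preorder on forests restricts to the $h$-preorder on trees.

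Next I would apply \cref{embedding} to the WQO $\F(Q)$. Since $Q$ is a WQO, $\T(Q)$ is a WQO, and hence so is $\F(Q) \cong \S(\T(Q))$. The lemma gives $\w(\T(Q)) \leq \w(\F(Q))$. Combining this with the preceding theorem, which requires $\w(Q) \geq 3$, yields
\[
\w(\F(Q)) \geq \w(\T(Q)) \geq \phi_{-1+\w(Q)}(0).
\]

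An alternative would be to use \cref{w(S)} on $\F(Q) \cong \S(\T(Q))$. Since $\w(\T(Q))$ is infinite here, that lemma gives $\w(\F(Q)) \geq 2^{\w(\T(Q))} \geq \w(\T(Q))$. This is a stronger inequality but unnecessary. There is no real obstacle in either route; the only point to check is that one-tree forests compare exactly as the corresponding trees do, and this is immediate from the recurrence lemma.
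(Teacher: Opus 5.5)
Your proof is correct and is essentially the argument the paper leaves implicit: the corollary is stated without proof right after the theorem $\w(\T(Q)) \geq \phi_{-1+\w(Q)}(0)$. It follows from the trivial reflection $t \mapsto \{t\}$ of $\T(Q)$ into $\F(Q) \cong \S(\T(Q))$ combined with the embedding lemma. Your alternative via the powerset width lemma, valid because $\w(\T(Q)) \geq \phi_2(0)$ is infinite, also works.
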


\begin{corollary}
    Let $Q$ be a WQO such that $\o(Q) = \w(Q) = \alpha \geq 3$. Then $\o(\T(Q)) = \w(\T(Q)) = \o(\F(Q)) = \w(\F(Q)) = \phi_{-1+\alpha}(0)$.
\end{corollary}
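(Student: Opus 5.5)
The plan is to sandwich all four quantities between the same bounds. For the upper bound I will use the theorem of \cref{section o}: since $\o(Q)=\alpha$, it gives $\o(\T(Q)) \leq \phi_{-1+\alpha}(0)$. Because $\alpha \geq 3 \geq 2$, the corollary following that theorem gives $\o(\F(Q)) \leq \phi_{-1+\alpha}(0)$ as well. For the lower bound I will use the last theorem of \cref{section w} and its corollary: since $\w(Q)=\alpha \geq 3$, they give $\w(\T(Q)) \geq \phi_{-1+\alpha}(0)$ and $\w(\F(Q)) \geq \phi_{-1+\alpha}(0)$.

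It then remains to relate width to maximal order type. As noted in \cref{wqos}, every antichain is a bad sequence, so $\w(P) \leq \o(P)$ for every WQO $P$. Applying this to $P=\T(Q)$ and to $P=\F(Q)$ gives the chains
\[
\phi_{-1+\alpha}(0) \leq \w(\T(Q)) \leq \o(\T(Q)) \leq \phi_{-1+\alpha}(0)
\]
and
\[
\phi_{-1+\alpha}(0) \leq \w(\F(Q)) \leq \o(\F(Q)) \leq \phi_{-1+\alpha}(0).
\]
All the inequalities in both chains must therefore be equalities, and this proves the corollary.

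I expect no real obstacle here. The only things to check are that the side conditions of the cited results hold, namely that $Q$ is non-empty, $\o(Q) \geq 2$ and $\w(Q) \geq 3$, and all of these follow from $\alpha \geq 3$. One should also confirm that the statements as written all use the same ordinal $\phi_{-1+\alpha}(0)$, which they do.
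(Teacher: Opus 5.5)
Your proposal is correct and matches the paper's intended argument: the paper gives no separate proof, and the corollary follows by sandwiching the four quantities between the upper bounds on $\o(\T(Q))$ and $\o(\F(Q))$ from the maximal order type section and the lower bounds on $\w(\T(Q))$ and $\w(\F(Q))$ from the width section, using $\w(P) \leq \o(P)$. Your check of the side conditions (non-emptiness, $\o(Q) \geq 2$, $\w(Q) \geq 3$, all implied by $\alpha \geq 3$) covers everything those cited results require.
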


Since for any ordinal $\alpha$ there is a WQO $Q$ with $\o(Q) = \w(Q) = \alpha$, the given bounds are tight.

We may apply the last corollary iteratively. Thus, let $\T_m(Q)$ denote the \textit{iterated $h$-preorder} (introduced and studied in \cite{Selivanov2011}): $\T_0(Q) = Q, \T_{m+1}(Q) = \T(\T_m(Q))$. $\F_m(Q)$ is a forest variant of $\T_m(Q)$.

\begin{corollary}
    Let $Q$ be a WQO such that $\o(Q) = \w(Q) = \alpha \geq 3$. Then $\o(\T_m(Q)) = \w(\T_m(Q)) = \o(\F_m(Q)) = \w(\F_m(Q)) = \phi_{\phi_{\dots_{\phi_\alpha(0)}}(0)}(0)$ (with $m$ applications of $\phi$).
\end{corollary}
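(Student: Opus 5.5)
We have to prove the last corollary: for $\o(Q)=\w(Q)=\alpha\geq 3$, the four invariants of $\T_m(Q)$ and $\F_m(Q)$ all equal the $m$-fold iterate $\phi_{\phi_{\dots_{\phi_\alpha(0)}}(0)}(0)$. The argument is by induction on $m$. The case $m=0$ is the hypothesis $\o(Q)=\w(Q)=\alpha$, reading the empty iterate as $\alpha$. Since $\F_0(Q)$ is not really defined, we start the forest statement at $m\geq 1$.

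For the inductive step, put $\beta_m=\o(\T_m(Q))=\w(\T_m(Q))$ and assume $\beta_m\geq 3$. Since $\T_{m+1}(Q)=\T(\T_m(Q))$, the previous corollary applies to the WQO $\T_m(Q)$. It gives
\[
\o(\T_{m+1}(Q))=\w(\T_{m+1}(Q))=\o(\F(\T_m(Q)))=\w(\F(\T_m(Q)))=\phi_{-1+\beta_m}(0).
\]
Interpreting $\F_{m+1}(Q)$ as $\F(\T_m(Q))$, this is exactly the forest statement as well.

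Two bookkeeping points remain. First, $-1+\beta_m=\beta_m$ whenever $\beta_m$ is infinite. For $m\geq 1$, $\beta_m$ is a value of $\phi$ and hence infinite. For $m=0$, $\beta_0=\alpha$ may be finite, and the first step gives $\phi_{-1+\alpha}(0)$. This agrees with the displayed formula only with the convention, used throughout the paper, that the innermost index is $-1+\alpha$; for infinite $\alpha$ the two coincide. Second, $\beta_m\geq 3$ holds because $\beta_0=\alpha\geq 3$ and every $\phi_\gamma(0)\geq 1$; more precisely $\phi_{-1+\alpha}(0)\geq\phi_2(0)$, which is infinite.

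The only delicate point is the identification of $\F_{m}(Q)$. If $\F_{m+1}(Q)$ is meant as $\F(\F_m(Q))$ rather than $\F(\T_m(Q))$, an extra step is needed. Using $\F(P)\cong\S(\T(P))$, \cref{o(S),w(S)}, and the fact that epsilon numbers are fixed by $2^{\cdot}$, one checks $\o(\F_m(Q))=\w(\F_m(Q))$, and then applies the previous corollary to $P=\F_m(Q)$. Both readings therefore give the same value.
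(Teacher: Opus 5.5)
Your proof is correct and is the paper's argument: apply the preceding corollary to the WQO $\T_m(Q)$, whose $\o$ and $\w$ agree by the induction hypothesis. Under the $\F(\F_m(Q))$ reading your $2^{\cdot}$ detour is unnecessary, since the preceding corollary already gives $\o(\F(P))=\w(\F(P))$. You are also right that for finite $\alpha$ the innermost index must be $-1+\alpha$ (e.g.\ $Q=\overline{3}$, $m=1$ gives $\phi_2(0)$, not $\phi_3(0)$). The paper uses no such convention in this formula, so this is a genuine correction: as literally written, the statement holds only for infinite $\alpha$.
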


In particular case when $Q$ is a finite antichain $\overline{k}$, $\T_m\left(\overline{k}\right)$ is an initial segment of $\T_{m+1}\left(\overline{k}\right)$. Hence we can take a limit to obtain the invariants of the $\omega$-th iteration $\T_\omega\left(\overline{k}\right)$ and $\F_\omega\left(\overline{k}\right)$. We do not define it formally; for technical details and additional information for the iterated h-preorder see e.g. \cite{Selivanov2011,Selivanov2022,Alaev2025}.

\begin{corollary}
    For any $k \geq 3$, $\o(\T_\omega\left(\overline{k}\right)) = \w(\T_\omega\left(\overline{k}\right)) = \o(\F_\omega\left(\overline{k}\right)) = \w(\F_\omega\left(\overline{k}\right)) = \Gamma_0$, where $\Gamma_0$ is a Feferman–Schütte ordinal -- the least $\alpha$ such that $\phi_\alpha(0) = \alpha$.
\end{corollary}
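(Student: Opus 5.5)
The plan is to prove the lower bound $\Gamma_0$ for the width and the upper bound $\Gamma_0$ for the maximal order type. Since $\w \leq \o$ always, and $\F_\omega(\overline{k}) \cong \S(\T_\omega(\overline{k}))$ (the forest version is controlled by \cref{o(S)} and \cref{w(S)} as in the earlier corollaries), this is enough. Write $\beta_0 = k$ and $\beta_{m+1} = \phi_{-1+\beta_m}(0)$. By the previous corollary applied with $\alpha = k$ (note that $\o(\overline{k}) = \w(\overline{k}) = k \geq 3$), we get $\o(\T_m(\overline{k})) = \w(\T_m(\overline{k})) = \beta_m$ for every $m$. A standard ordinal computation gives $\sup_m \beta_m = \Gamma_0$: the sequence is increasing, every $\beta_m$ is below $\Gamma_0$ because $\Gamma_0$ is closed under $(\alpha,\beta) \mapsto \phi_\alpha(\beta)$, and the supremum $\gamma$ satisfies $\phi_\gamma(0) = \gamma$ by continuity of $\alpha \mapsto \phi_\alpha(0)$ at limits.

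For the lower bound, each $\T_m(\overline{k})$ is an initial segment of $\T_\omega(\overline{k})$, so the inclusion is a reflection. By \cref{embedding}, $\w(\T_\omega(\overline{k})) \geq \beta_m$ for all $m$, hence $\w(\T_\omega(\overline{k})) \geq \Gamma_0$. The same holds for $\o$ and for the forest versions.

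For the upper bound I will use the residual equation. It suffices to show $\o(L_{\ngeq,\T_\omega}(x)) < \Gamma_0$ for every $x \in \T_\omega(\overline{k})$. Fix $m$ with $x \in \T_m(\overline{k})$. The difficulty is that $L_{\ngeq}(x)$ is not contained in any $\T_n(\overline{k})$, because it is downward closed in the whole union. So I will imitate the second case of the proof of the upper-bound theorem for $\o(\T(Q))$. There, $\L{rF} \subset \T(\bigcup_{x\in X}\L{x})$, since a tree having a label above all labels of $rF$ already dominates $rF$. Here I view any $y \in \T_n(\overline{k})$ with $n > m$ as an element of $\T_{n-m}(\T_m(\overline{k}))$. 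If some innermost label $z \in \T_m(\overline{k})$ of $y$ satisfied $x \leq z$, then, since a label embeds into the tree carrying it, we would get $x \leq y$. Hence all innermost labels lie in $L_{\ngeq,\T_m}(x)$, which has maximal order type $\delta < \beta_m$. This gives
$$L_{\ngeq,\T_\omega}(x) \embeds \T_\omega\bigl(L_{\ngeq,\T_m}(x)\bigr).$$
Here $\T_\omega(P)$ means the union of the iterates $\T_j(P)$ over a general WQO $P$; I need the compatible embeddings $\T_j(P) \subset \T_{j+1}(P)$, which hold whenever $P$ embeds as the innermost level.

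The iterated upper-bound theorem, which needs no hypothesis $\o = \w$, gives $\o(\T_j(P)) \leq \delta_j$, where $\delta_0 = \o(P)$ and $\delta_{j+1} = \phi_{\delta_j}(0)$. So the upper-bound problem is reduced to bounding $\o(\T_\omega(P))$ for $P$ of maximal order type $\delta < \Gamma_0$.

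This last step is the main obstacle, since it has the same shape as the original problem. I would handle it by a well-founded induction on $\o(P) < \Gamma_0$ with the claim $\o(\T_\omega(P)) \leq \sup_j \delta_j$. That supremum is the least fixed point of $\alpha \mapsto \phi_\alpha(0)$ above $\delta$, which is $\Gamma_0$ itself when $\delta < \Gamma_0$. This is too weak: it yields only $\leq \Gamma_0$ for the residual, while the residual equation needs strictly less. The real work is therefore a finer induction. For $x$ at level $m$, the residual set must be bounded by a finite tower $\phi_{\phi_{\cdots}(0)}(0)$ of fixed height built from the finitely many labels of $x$ (appearing at every nesting level), rather than by the full $\omega$-iteration. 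I expect to get this by induction on $m$ and on the size of $x$, using at each level the embedding $\L{rF} \embeds \T(\bigcup_{x \in X}\L{x})$ from the upper-bound proof. The point is that a $y$ of very high level with $x \not\leq y$ has to avoid $x$ at every level simultaneously, and this should collapse $y$ into an iterate of height about $m$ over a strictly smaller base. Once each residual is shown to be below $\Gamma_0$, the residual equation gives $\o(\T_\omega(\overline{k})) \leq \Gamma_0$.
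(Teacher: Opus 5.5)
\textbf{The lower bound is fine; the upper bound fails, and cannot be repaired.} Your lower bound is correct and is all that the paper's argument actually delivers. Each $\T_m(\overline{k})$ is an initial segment of $\T_\omega(\overline{k})$, so by \cref{embedding} all four invariants are at least $\sup_m \beta_m = \Gamma_0$. The genuine gap is the upper bound, and a finer induction will not close it. The strict inequality $\o(L_{\ngeq}(x)) < \Gamma_0$ that you need for every $x$ is false, at least for $k \geq 4$.

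\textbf{Counterexample for $k \geq 4$.} Take $x = k-1 \in \overline{k} = \T_0(\overline{k})$. Its image in $\T_n(\overline{k})$ is the $n$-fold nested singleton. A one-vertex tree with label $\ell$ lies below a tree $t$ iff $\ell$ lies below some label of $t$. Unwinding this, $x \leq y$ iff $k-1$ occurs among the innermost labels of $y$, and $y \leq x$ iff all innermost labels of $y$ equal $k-1$. Now $\T_m(\overline{k-1})$ sits inside $\T_m(\overline{k})$ as the elements that never use the label $k-1$. This is an order embedding, since the $h$-preorder only compares labels. Hence $\T_m(\overline{k-1}) \subseteq L_{\bot}(x) \subseteq L_{\ngeq}(x)$ for every $m$. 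By the preceding corollary with $\alpha = k-1 \geq 3$, $\o(\T_m(\overline{k-1})) = \w(\T_m(\overline{k-1}))$ is the $m$-fold $\phi$-tower, whose supremum is $\Gamma_0$. So $\o(L_{\ngeq}(x)) \geq \Gamma_0$ and $\w(L_{\bot}(x)) \geq \Gamma_0$. The residual equations then give $\o(\T_\omega(\overline{k})) \geq \w(\T_\omega(\overline{k})) \geq \Gamma_0+1$. The same bounds hold for $\F_\omega(\overline{k}) \supseteq \T_\omega(\overline{k})$.

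\textbf{Where your plan breaks.} The failing step is your heuristic that a high-level $y$ with $x \not\leq y$ ``collapses into an iterate of bounded height over a strictly smaller base.'' For $x$ of level $0$, avoiding $x$ just means avoiding one label, and $L_{\ngeq}(x)$ contains all of $\bigcup_m \T_m(\overline{k-1})$. So your intermediate estimate $\o(\T_\omega(P)) \leq \Gamma_0$ is attained, not merely weak. In fact $\T(\T_\omega(\overline{k})) \embeds \T_\omega(\overline{k})$, so the paper's width theorem forces $\w(\T_\omega(\overline{k}))$ to be a fixed point of $\alpha \mapsto \phi_\alpha(0)$ above $\Gamma_0$, hence at least $\Gamma_1$.

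\textbf{The paper's own argument.} The paper's justification is to take the limit over the initial segments $\T_m(\overline{k})$, and this also proves only the lower bound. The maximal order type of an increasing union of initial segments need not be the supremum of theirs. For example, $\omega \sqcup \omega$ is the union of the initial segments $\{0,\dots,m-1\} \sqcup \{0,\dots,m-1\}$. Their maximal order types $2m$ have supremum $\omega$, while $\o(\omega \sqcup \omega) = \omega \cdot 2$. So neither your argument nor the paper's establishes the upper half of the statement, and for $k \geq 4$ the claimed equalities are false. The statement itself must be corrected before an upper-bound proof is worth attempting.
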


\section{Future work}

In this work, we proved the bounds for the maximal order type and the width, but not for the height. This invariant will be considered in the upcoming work by the present author and Victor Selivanov.

Our bounds work well in the case when the maximal order type and the width of the base structure coincide, but in the other cases, the exact values of ordinal invariants are unknown. These questions are still open even in the finite case; we do not know the answer even for the structure $(\mathsf{P}(k), \subseteq)$, where $k \geq 3$ ($\mathsf{P}$ denotes the powerset).

Another possible direction for future work is the ordinal invariants of $h$-preorder on \textit{infinite} well-founded trees \cite{Selivanov2007}. The presented methods do not work directly in this case, and the main obstacle is the absence of the estimates on the maximal order type and the width of the powerset. Even in the cases when the powerset is a WQO, its ordinal invariants cannot be computed the same way as in the finitary case, since the residual sets have much less straightforward structure.

\bibliographystyle{plain}
\bibliography{refs}

\end{document}